%
\documentclass[10pt]{article}
\usepackage{fullpage}
\usepackage{graphicx,amsmath,amsthm,epstopdf,tikz,enumitem,color,%
	float,amsfonts,wasysym,etoolbox}


\usetikzlibrary{backgrounds,arrows.meta}

\usepackage[mathscr]{euscript}
\usepackage[colorlinks=true,allcolors=blue]{hyperref}
\usepackage[capitalise,noabbrev,nameinlink]{cleveref}


\usepackage{caption}
\usepackage[subrefformat=simple,labelformat=simple]{subcaption}

\numberwithin{figure}{section}
 


\def\ignoretwo#1#2{}
\ifdef\IfFormatAtLeastTF{%
  \IfFormatAtLeastTF{2024-11-01}{
    \immediate\write16{New LaTeX, adding hooks for cleveref.}%
    \let\CrefAddToHook=\AddToHook%
  }{
    \immediate\write16{Old LaTeX, not adding hooks for cleveref.}%
    \let\CrefAddToHook=\ignoretwo%
  }%
}{
  \immediate\write16{Very old LaTeX, not adding hooks for cleveref.}%
  \let\CrefAddToHook=\ignoretwo%
}%

\usepackage{thm-restate}

\newtheorem{theorem}{Theorem}[section]
 \CrefAddToHook{env/theorem/begin}{\crefalias{section}{theorem}}
\newtheorem{lemma}[theorem]{Lemma}
 \CrefAddToHook{env/lemma/begin}{\crefalias{section}{lemma}}
\newtheorem{corollary}[theorem]{Corollary}
 \CrefAddToHook{env/corollary/begin}{\crefalias{section}{corollary}}

\newtheorem{observation}[theorem]{Observation}
 \CrefAddToHook{env/observation/begin}{\crefalias{section}{observation}}
 \crefname{observation}{Observation}{Observations}
\crefname{appendix}{Appendix}{Appendices}
\crefname{subsection}{Subsection}{Subsections}
\makeatletter
\CrefAddToHook{cmd/appendix/before}{%
  \crefalias{section}{appendix}%
  \crefalias{subsection}{appendix}%
}
\makeatother


\newlist{statement}{enumerate}{1}
\setlist[statement]{label=\textup{(\alph*)},ref=\doublelabel{\textup{(\alph*)}},before=\setcrefdoublealias,nosep}
\crefname{statementi}{statement}{statements} 

\makeatletter
\newcommand*\doublelabel[1]{\protect\@twolabels{#1}{\@currentlabel#1}}
\let\@twolabels\@firstoftwo

\def\setcrefdoublealias{%
	\begingroup\edef\x{\endgroup%
		\noexpand\crefalias{statementi}{%
			\noexpand\protect\noexpand\@twolabels%
			{statementi}{\expandafter\@extractcounterfromcreflabel\cref@currentlabel\end@extractcounterfromcreflabel}%
		}%
	}\x%
}
\def\@extractcounterfromcreflabel[#1]#2\end@extractcounterfromcreflabel{#1}

\newcommand*\versionWithTheorem[1]{\@ifstar{\versionWithTheorem@aux{#1*}}{\versionWithTheorem@aux{#1}}}
\newcommand*\versionWithTheorem@aux[2]{\begingroup\let\@twolabels\@secondoftwo#1{#2}\endgroup}
\makeatother


\DeclareRobustCommand*\crefWithTheorem{\versionWithTheorem\cref}



\parskip 4.5pt


\newcommand{\Loneinf}{\mathscr{L}_{\infty}}
\newcommand{\Ltwoinf}{\mathscr{L}^{\Delta}_{\infty}}
\newcommand{\Lthreeinf}{\mathscr{L}^{\nabla\Delta}_{\infty}}
\def\tri{^\Delta}
\def\tritri{^{\nabla\Delta}}
\def\fref#1{f_{\ref{#1}}}


\long\def\ignore#1{}


\title{Unavoidable substructures in large and infinite $2$-edge-connected graphs}
\author{Sarah Allred\thanks{Department of Mathematics \& Statistics, University of South Alabama, Mobile, AL, USA (sarahallred@southalabama.edu)}
\and M.~N.~Ellingham\thanks{Department of Mathematics,  Vanderbilt University, Nashville, TN, USA (mark.ellingham@vanderbilt.edu)}
}

\date{June 8, 2026}
\begin{document}
\maketitle

\begin{abstract}
In 1930, Ramsey proved that every large graph contains either a large clique or a large edgeless graph as an induced subgraph.  It is well known that every large connected graph contains a long path, a large clique, or a large star as an induced subgraph.  Recently Allred, Ding, and Oporowski presented the unavoidable large induced subgraphs for large and infinite $2$-connected graphs.  The $2$-edge-connected (sometimes called bridgeless) graphs form an important class between connected graphs and $2$-connected graphs.  In this paper we describe the unavoidable large induced subgraphs for large and infinite $2$-edge-connected graphs.
Ubiquitous structures in $2$-edge-connected graphs that we call \emph{chains of pinched super-clean ladders} play an important role in these descriptions.
As consequences we obtain results on unavoidable large subgraphs, topological minors, minors, induced topological minors, induced minors, and Eulerian subgraphs in large and infinite $2$-edge-connected graphs.  When appropriate we extend our results to multigraphs.
\end{abstract}

\begingroup\small
\noindent\textbf{{Keywords:}} Ramsey theorem, $2$-edge-connected graph, infinite graph, unavoidable induced subgraph, unavoidable subgraph, unavoidable topological minor, unavoidable minor.
\par
\noindent\textbf{{MSC (2020) codes:}} 05C75 (05C55 05C63).
\endgroup


\section{Introduction}

\subsection{Overview}

In this paper graphs are simple, and multigraphs may have multiple edges but not loops.  Graphs and multigraphs are assumed to be finite unless we explicitly indicate that they are infinite.  Terms and symbols not defined here follow West~\cite{west}.  For notational simplicity we use $\infty$ to mean countable infinity, $\aleph_0$, and we generalize notation for finite graphs such as $K_r, K_{k,r}, K_{1,1,r}$ to countably infinite graphs $K_\infty, K_{k,\infty}, K_{1,1,\infty}$, and so on, if there is no ambiguity.  Sometimes there may be more than one infinite analog of a class of finite graphs (for example, there are one-way and two-way infinite paths), and we give specific definitions. 

The main part of this paper presents results on unavoidable large induced subgraphs in $2$-edge-connected graphs.  The class of $2$-edge-connected graphs (also called \emph{bridgeless} graphs) is the largest class of graphs with a simple connectivity condition stronger than just being connected.  A number of significant problems (such as the Cycle Double Cover Conjecture and Tutte's $5$-flow Conjecture) involve $2$-edge-connected graphs.

Currently results for unavoidable large induced subgraphs are known for all graphs, connected graphs, and $2$-connected graphs.  We discuss these results in more detail below.  To obtain our results for $2$-edge-connected graphs we use the previous $2$-connected results described in  \cref{ss:highconn}, along with a result showing that a certain type of induced subgraph is ubiquitous in $2$-edge-connected graphs.

Our main results allow us to give as an \hyperref[sec:otherstructures]{appendix} a comprehensive list of unavoidable substructure results for $2$-edge-connected graphs, covering unavoidable subgraphs, topological minors, minors, induced topological minors, and induced minors.  To illustrate potential applications, we show that our results imply, and strengthen, a recent result of Goddard and LaVey \cite{Goddard2024} on large Eulerian subgraphs in $2$-edge-connected graphs, which was used to obtain rainbow walks in large edge-colored $2$-edge-connected graphs.

\subsection{Background}

Many results in graph theory depend on the fact that sufficiently large (or infinite) graphs contain one of a family of unavoidable substructures. 
For example, Ding, Oporowski, Thomas, and Vertigan \cite{crossingcrit} use results on unavoidable topological minors in $3$-connected graphs from \cite{Unavoidabletopminor3conngraphs} to obtain results on $2$-crossing-critical graphs, and Ding and Marshall \cite{GuoliEmily} use \cref{thm:indconnfin} below to obtain a characterization of graphs with no large theta graph as a minor.
Results guaranteeing the existence of unavoidable substructures are therefore of fundamental importance.

Two foundational results of this kind for infinite graphs are due to K\"{o}nig in 1927 and Ramsey in 1930.  Denote the \emph{ray} or \emph{one-way infinite path} $v_1 v_2 v_3 \dots$ by $P_\infty$.
K\"{o}nig's result includes the following as an important special case.

\begin{theorem}[K\"{o}nig \cite{konig}]\label{thm:koniginf}
Every infinite connected graph has either $K_{1,\infty}$ or $P_\infty$ as a subgraph.
\end{theorem}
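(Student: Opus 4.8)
The plan is to split on vertex degrees. First I would observe that if some vertex $v$ has infinite degree, then $v$ together with infinitely many of its neighbors already forms a copy of $K_{1,\infty}$, and we are finished. So the substantive case is when $G$ is \emph{locally finite}, i.e.\ every vertex has finite degree; there I would construct a ray $P_\infty$ directly.

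The key structural fact I would use repeatedly is that deleting a finite-degree vertex cannot split an infinite connected graph into infinitely many pieces. Concretely: if $H$ is connected and $v$ has finite degree in $H$, then every component of $H - v$ contains a neighbor of $v$, so $H - v$ has at most $\deg(v)$ components; when $H$ is infinite, one of these finitely many components must itself be infinite.

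With this in hand I would build the ray greedily, maintaining the invariant that after choosing a path $v_1 v_2 \cdots v_n$, the graph $G - \{v_1,\dots,v_{n-1}\}$ has an infinite component $C$ containing $v_n$. I would start with $v_1$ arbitrary and $C = G$. Given the path and $C$, apply the fact above to $C$ and $v_n$ to find an infinite component $C'$ of $C - v_n$; since $C$ is connected, $v_n$ has a neighbor $v_{n+1}$ in $C'$, and $C'$ is precisely the infinite component of $G - \{v_1,\dots,v_n\}$ containing $v_{n+1}$, so the invariant is restored. Because each $v_{n+1}$ lies outside $\{v_1,\dots,v_n\}$ and is adjacent to $v_n$, the vertices $v_1 v_2 v_3 \cdots$ form a ray, giving $P_\infty$.

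The main obstacle is ensuring the greedy step never gets stuck: I must guarantee that at every stage there is still an infinite region to advance into, and that the next vertex can be chosen both adjacent to the current endpoint and inside that infinite region. This is exactly what the invariant together with the ``finitely many components, one infinite'' lemma is designed to secure, and the one point needing care is verifying that $C'$ remains a genuine component of the freshly reduced graph rather than merging with previously deleted parts. This argument is, of course, the locally finite case of K\"onig's infinity lemma.
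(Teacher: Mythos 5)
Your proof is correct: the dichotomy on a vertex of infinite degree, the lemma that deleting a finite-degree vertex from an infinite connected graph leaves finitely many components of which one must be infinite, and the greedy ray construction maintained by your invariant together constitute the classical proof of K\"onig's infinity lemma, which is exactly the argument the paper's citation to K\"onig stands in for (the paper itself gives no proof). The one delicate point you flagged is indeed fine: since $C$ is a full component of $G-\{v_1,\dots,v_{n-1}\}$, no edges leave $C$, so each component of $C-v_n$ is a full component of $G-\{v_1,\dots,v_n\}$ and cannot merge with anything outside $C$.
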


Ramsey proved a result for complete uniform hypergraphs; the simplest case can be stated as follows.

\begin{theorem}[Ramsey \cite{ramsey}]\label{thm:ramseyinf}
Every infinite graph contains either $K_\infty$ or its complement $\overline{K}_\infty$ as an induced subgraph.
\end{theorem}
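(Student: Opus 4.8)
The plan is to prove this directly by the classical iterative (``diagonal'') construction, which produces a single infinite \emph{homogeneous} set of vertices. Let $G$ be an infinite graph; since $\infty$ here denotes $\aleph_0$, its vertex set $V$ is countably infinite. First I would build a nested sequence of infinite vertex sets $V = V_0 \supseteq V_1 \supseteq V_2 \supseteq \cdots$ together with vertices $v_1, v_2, \dots$ and ``colors'' $c_1, c_2, \dots \in \{\text{adjacent}, \text{nonadjacent}\}$, as follows. Given an infinite set $V_{i-1}$, choose any $v_i \in V_{i-1}$. The remaining infinite set $V_{i-1} \setminus \{v_i\}$ splits into the neighbors and the non-neighbors of $v_i$; since an infinite set cannot be the union of two finite sets, at least one of these two parts is infinite. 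I would let $V_i$ be such an infinite part and set $c_i$ to be $\text{adjacent}$ or $\text{nonadjacent}$ accordingly, so that every vertex of $V_i$ relates to $v_i$ in the manner recorded by $c_i$.

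The key observation is a homogeneity property of this sequence: if $i < j$ then $v_j \in V_{j-1} \subseteq V_i$, and every vertex of $V_i$ bears the same adjacency relationship to $v_i$, namely $c_i$. Hence whether $v_i v_j$ is an edge depends only on $c_i$, not on the choice of $j$. I would then apply the pigeonhole principle to the infinitely many colors $c_1, c_2, \dots$: since there are only two colors, infinitely many of the $c_i$ agree, say on the common value $c$. Let $S$ be the (infinite) set of indices $i$ with $c_i = c$.

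Finally I would read off the conclusion. For any two indices $i < j$ in $S$, the edge $v_i v_j$ is present if and only if $c_i = \text{adjacent}$, that is, if and only if $c = \text{adjacent}$, independently of the pair chosen. Thus the set $\{v_i : i \in S\}$ induces a complete graph when $c = \text{adjacent}$, giving an induced $K_\infty$, and an edgeless graph when $c = \text{nonadjacent}$, giving an induced $\overline{K}_\infty$.

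The argument is short, and the only point requiring care is the construction step, where one must verify at each stage that an infinite part genuinely survives and that each $V_i$ is nested inside all previous sets. This nesting is precisely what delivers the homogeneity property, and hence an \emph{induced} (rather than merely weak) subgraph, so I expect it to be the main thing to get right; everything afterward is a routine two-color pigeonhole on the index set.
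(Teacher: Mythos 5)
Your proof is correct, and it is the classical diagonal (nested-infinite-sets) argument for the two-color infinite Ramsey theorem on pairs; the paper itself offers no proof to compare against, since it states this result as background and cites Ramsey directly. The construction, the homogeneity observation (for $i<j$, $v_j\in V_{j-1}\subseteq V_i$, so the adjacency of $v_iv_j$ is determined by $c_i$ alone), and the final pigeonhole step are all sound, and the nesting does indeed deliver an \emph{induced} $K_\infty$ or $\overline{K}_\infty$. One small point: you assume at the outset that $V$ is countably infinite, whereas the paper explicitly remarks that the theorem applies to graphs of any infinite cardinality (while guaranteeing only countable substructures). This is harmless but should be addressed in a sentence: either note that your argument works verbatim for any infinite $V$ (the splitting step only needs that an infinite set is not the union of two finite sets), or reduce to the countable case by first passing to the subgraph induced on any countably infinite subset of vertices.
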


K\"{o}nig's result and Ramsey's result differ in two ways.
First, K\"{o}nig's result guarantees a subgraph, while Ramsey's result guarantees an induced subgraph.  
Second, K\"{o}nig's result has a connectivity condition, while Ramsey's does not.  However, both \cref{thm:koniginf,thm:ramseyinf} apply to graphs whose number of vertices is any infinite cardinal, although they guarantee only countably infinite substructures.
And both results have counterparts for sufficiently large finite graphs: \cref{thm:konigfin} is an easy exercise, and \cref{thm:ramseyfin} was also proved by Ramsey.

\begin{theorem}\label{thm:konigfin}
For every positive integer $r$, there is an integer $\fref{thm:konigfin}(r)$ such that every graph on at least $\fref{thm:konigfin}(r)$ vertices has either $K_{1,r}$ or $P_r$ as a subgraph.
\end{theorem}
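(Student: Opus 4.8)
The plan is to reduce to a dichotomy on the maximum degree of the graph, exactly as in the infinite statement \cref{thm:koniginf}. This forces me to read the theorem for \emph{connected} $G$: without a connectivity hypothesis the statement is false, since a disjoint union of arbitrarily many triangles has maximum degree $2$ and longest path $P_3$, and so contains neither $K_{1,4}$ nor $P_4$. I therefore take $G$ to be connected, matching K\"{o}nig's result, and proceed.

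First I would dispose of the high-degree case. If $G$ has a vertex of degree at least $r$, then that vertex together with $r$ of its neighbours yields a copy of $K_{1,r}$, and we are done. So I may assume that every vertex of $G$ has degree at most $r-1$.

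Next I would use connectivity to extract a long path. Choose any spanning tree $T$ of $G$ and root it at an arbitrary vertex. Since $\deg_T(v)\le\deg_G(v)\le r-1$ for every $v$, the root has at most $r-1$ children and every other vertex has at most $r-2$ children, so the number of vertices at depth $i$ is at most $(r-1)^i$. Hence if $T$ had height at most $r-2$ then $|V(G)|=|V(T)|\le\sum_{i=0}^{r-2}(r-1)^i<(r-1)^{r-1}$. I would therefore take $\fref{thm:konigfin}(r)=(r-1)^{r-1}$ for $r\ge 3$, with the trivial values $\fref{thm:konigfin}(1)=1$ and $\fref{thm:konigfin}(2)=2$. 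Then whenever $|V(G)|\ge\fref{thm:konigfin}(r)$ the tree $T$ must have height at least $r-1$, so some root-to-leaf path of $T$ passes through at least $r$ vertices; as $T\subseteq G$, this path is a copy of $P_r$ in $G$, which completes the argument.

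Every step is elementary, so there is no real obstacle here, in keeping with the billing as an easy exercise. The only points that need any care are the bookkeeping in the counting bound — choosing the branching estimate and geometric sum so that bounded height forces boundedly many vertices — and recognising that connectivity, which the statement inherits as the finite counterpart of \cref{thm:koniginf}, is indispensable and must genuinely be used.
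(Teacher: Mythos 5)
Your proof is correct, and your reading of the statement is the right one: as printed, \cref{thm:konigfin} omits the word ``connected'', and your counterexample (a disjoint union of triangles, or of independent edges for small $r$) shows that hypothesis is genuinely needed. The theorem is clearly intended as the finite counterpart of K\"{o}nig's result for connected graphs, and the paper's later remark that \cref{thm:indconnfin} --- which does assume connectivity --- immediately implies \cref{thm:konigfin} confirms this; so you have in effect caught a small erratum rather than misread the problem.

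The paper gives no proof of its own, calling the result an easy exercise, so the only comparison available is with its implicit derivation: \cref{thm:konigfin} follows from \cref{thm:indconnfin} because $K_r$ contains $P_r$ (a Hamiltonian path) as a subgraph, and \cref{thm:indconnfin} is in turn proved via Ramsey's theorem (\cref{thm:ramseyfin}). Your route is genuinely different and more elementary: the maximum-degree dichotomy plus a branching count in a rooted spanning tree avoids Ramsey entirely and yields the explicit single-exponential bound $\fref{thm:konigfin}(r)=(r-1)^{r-1}$, whereas the Ramsey-based derivation gives far weaker bounds (and proves more, since it controls induced subgraphs). Your bookkeeping checks out: with maximum degree at most $r-1$, the root has at most $r-1$ children and every other vertex at most $r-2$, so depth $i$ holds at most $(r-1)^i$ vertices; since $r-2\ge 1$ for $r\ge 3$, $\sum_{i=0}^{r-2}(r-1)^i=\bigl((r-1)^{r-1}-1\bigr)/(r-2)\le (r-1)^{r-1}-1$, so order at least $(r-1)^{r-1}$ forces a vertex at depth at least $r-1$ and hence a root-to-leaf path on at least $r$ vertices, and the small cases $r=1,2$ are handled as you say.
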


\begin{theorem}[Ramsey \cite{ramsey}]\label{thm:ramseyfin}
For every positive integer $r$, there is an integer $\fref{thm:ramseyfin}(r)$ such that every graph on at least $\fref{thm:ramseyfin}(r)$ vertices contains either~$K_r$ or~$\overline K_r$ as an induced subgraph.
\end{theorem}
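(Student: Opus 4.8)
The plan is to prove this finite Ramsey theorem by an iterative vertex-selection argument, which has the advantage of producing the \emph{induced} clique or independent set directly and of yielding an explicit bound $\fref{thm:ramseyfin}(r) = 2^{2r}$ (the exact constant being unimportant). The idea is to build a sequence of vertices in which the adjacency of each vertex to all \emph{later} vertices in the sequence is constant, and then to apply the pigeonhole principle to this sequence.

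First I would set up the selection process. Let $G$ have $N \ge 2^{2r}$ vertices and put $S_0 = V(G)$. Having chosen $v_1, \dots, v_{i-1}$ together with a set $S_{i-1}$, I pick any $v_i \in S_{i-1}$ and partition $S_{i-1} \setminus \{v_i\}$ into the set $A_i$ of neighbours of $v_i$ and the set $B_i$ of non-neighbours. I let $S_i$ be whichever of $A_i$, $B_i$ is larger and record a label $c_i \in \{+,-\}$, setting $c_i = +$ if $S_i = A_i$ and $c_i = -$ otherwise. Since $|S_i| \ge (|S_{i-1}|-1)/2$, an easy induction gives $|S_i| \ge 2^{2r-i} - 1 \ge 1$ for all $i \le 2r-1$, so every $S_{i-1}$ needed is nonempty and the process runs for at least $m = 2r-1$ steps, producing vertices $v_1, \dots, v_m$ with well-defined labels $c_1, \dots, c_m$.

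The key structural observation is that whenever $i < j$ the vertex $v_j$ lies in $S_i$, so $v_i$ is adjacent to $v_j$ exactly when $c_i = +$; that is, the adjacency between any two chosen vertices is dictated by the label of the earlier one. I then apply the pigeonhole principle to the $2r-1$ labels $c_1, \dots, c_{2r-1}$: at least $r$ of them agree. If $r$ of them equal $+$, the corresponding vertices form a clique, and if $r$ equal $-$ they form an independent set; in either case, since we are reading off genuine adjacencies, these $r$ vertices induce a copy of $K_r$ or of $\overline{K}_r$. This completes the proof with $\fref{thm:ramseyfin}(r) = 2^{2r}$.

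There is no genuine obstacle here, as this is Ramsey's original finite theorem; the only point needing care is the bookkeeping that keeps the process alive for the full $2r-1$ steps, i.e.\ checking that the halving bound $|S_i| \ge (|S_{i-1}|-1)/2$ keeps $S_{2r-2}$ nonempty, and noting that any starting bound of the form $2^{\Theta(r)}$ suffices so the constant need not be optimised. I would also remark that, unlike \cref{thm:konigfin}, no connectivity hypothesis is required, and that the very same selection argument recovers the infinite statement \cref{thm:ramseyinf}: run it forever on an infinite graph to obtain an infinite label sequence, infinitely many of whose entries coincide, which yields $K_\infty$ or $\overline{K}_\infty$.
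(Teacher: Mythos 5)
Your proof is correct, but note that the paper does not actually prove this statement: it is quoted as Ramsey's classical theorem with a citation to \cite{ramsey}, so there is no in-paper argument to compare against. Your halving (greedy extraction) argument is the standard self-contained proof and all the bookkeeping checks out: the induction $|S_i|\ge 2^{2r-i}-1$ does keep $S_{2r-2}$ nonempty, so the $2r-1$ vertices and labels exist; since for $i<j$ the vertex $v_j$ lies in $S_i$, every adjacency among the chosen vertices is dictated by the label of the earlier vertex, and the pigeonhole principle on $2r-1$ binary labels yields $r$ equal ones. The ``induced'' requirement is automatic here, since a clique is an induced $K_r$ and an independent set an induced $\overline{K}_r$. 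Your closing remark on recovering \cref{thm:ramseyinf} is also sound, with the small caveat that to keep the process alive forever you must at each step select an \emph{infinite} one of $A_i$, $B_i$ (possible since their union is infinite), which uses a weak form of choice that is harmless here. For comparison, the other standard route is the Erd\H{o}s--Szekeres recursion $R(s,t)\le R(s-1,t)+R(s,t-1)$, which gives the sharper bound $\binom{2r-2}{r-1}$; your approach trades that constant for a shorter one-pass argument, which is entirely adequate since the paper only needs the existence of some bound $\fref{thm:ramseyfin}(r)$, not an optimal one.
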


Besides the subgraph and induced subgraph orderings, there are results involving unavoidable substructures for the topological minor and minor orderings of graphs, based on various connectivity conditions.  The four orderings can be ranked from strongest to weakest as induced subgraph, subgraph, topological minor, and minor.
A result on unavoidable substructures in a class of graphs for a given ordering generally also yields results for all weaker orderings.  Results for weaker orderings are usually easier to state because the complicated unavoidable structures for a stronger ordering have simpler unavoidable substructures under a weaker ordering.
Oporowski, Oxley, and Thomas \cite{Unavoidabletopminor3conngraphs} found unavoidable topological minors for $2$-connected, $3$-connected, and internally $4$-connected graphs, which imply results on minors.

Induced subgraphs occur in characterizations of a number of graph classes.
For example, the class of line graphs is characterized by finitely many forbidden induced subgraphs, and chordal graphs and perfect graphs are characterized by simple infinite sets of forbidden induced subgraphs.
Several important problems in graph theory involve forbidden induced subgraphs.
For example, the Matthews-Sumner Conjecture proposes that $4$-connected claw-free graphs are hamiltonian; this is true for $6$-connected graphs \cite{KV12}.
The Gy\'arf\'as-Sumner Conjecture states that graphs that do not have a particular tree $T$ as an induced subgraph are $\chi$-bounded, i.e., $\chi(G) \le f_T(\omega(G))$ for some function $f_T$; recent results on this appear in \cite{CSSS23-polybd6,LSWY23}.

The only known connectivity-based results for unavoidable induced subgraphs are for connected and $2$-connected graphs.
For connected graphs we have the following.

\begin{theorem}\label{thm:indconninf}
Every infinite connected graph contains $K_\infty$, $K_{1,\infty}$, or $P_\infty$ as an induced subgraph.
\end{theorem}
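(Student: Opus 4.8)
The plan is to split on whether $G$ has a vertex of infinite degree, resolving one case with Ramsey's theorem and the other with a shortest-path construction. Note that ``$G$ has $K_{1,\infty}$ as a subgraph'' is the same as ``$G$ has a vertex of infinite degree,'' so by \cref{thm:koniginf} the case in which no vertex has infinite degree already supplies a ray; the work lies in extracting \emph{induced} copies.

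Suppose first that some vertex $v$ has infinitely many neighbours, and set $N = N(v)$. The induced subgraph $G[N]$ is infinite, so by \cref{thm:ramseyinf} it contains either $K_\infty$ or $\overline{K}_\infty$ as an induced subgraph. In the first case $G$ already contains an induced $K_\infty$. In the second case we obtain an infinite independent set $I \subseteq N$; since $v$ is adjacent to every vertex of $I$ while $I$ itself is independent, $G[\{v\} \cup I]$ is an induced $K_{1,\infty}$. Either outcome finishes this case, so from now on we may assume $G$ is locally finite.

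In the locally finite case I would construct an induced ray directly, rather than try to prune the possibly chord-laden ray furnished by \cref{thm:koniginf}. Fix a root $r$ and consider its distance classes $L_n = \{u : \dist(r,u) = n\}$. Local finiteness forces each $L_n$ to be finite, and connectedness together with $|V(G)| = \infty$ forces every $L_n$ to be nonempty (if some $L_n$ were empty then $V(G) = \bigcup_{k<n} L_k$ would be finite). Choosing for each $u \in L_n$ with $n \ge 1$ a neighbour in $L_{n-1}$ yields a spanning tree $T$ of $G$ whose levels coincide with distance from $r$. Since $T$ is infinite but every vertex has finitely many children, one may start at $r$ and repeatedly pass to a child having infinitely many descendants, obtaining a ray $r = u_0 u_1 u_2 \cdots$ with $\dist(r,u_i) = i$ for all $i$; this is König's infinity lemma, the general form of the result behind \cref{thm:koniginf}.

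It remains to verify that this geodesic ray is \emph{induced}, which is the crux of the argument. If $u_i u_j \in E(G)$ with $i < j$, then $j = \dist(r,u_j) \le \dist(r,u_i) + 1 = i + 1$, forcing $j = i+1$; thus the only edges among the $u_i$ join consecutive vertices, and $\{u_i : i \ge 0\}$ induces $P_\infty$. The one genuine obstacle is exactly this inducedness: an arbitrary ray can carry arbitrarily many chords, and the key insight is that passing to a shortest-path (geodesic) ray eliminates them automatically. Everything else---the case split, the Ramsey application, and the infinity-lemma construction---is routine.
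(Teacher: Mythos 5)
Your proof is correct and takes essentially the route the paper intends: the paper proves this theorem only by remarking that it follows from \cref{thm:ramseyinf,thm:koniginf}, and your case split (Ramsey applied to the neighbourhood of an infinite-degree vertex; K\"{o}nig's infinity lemma in the locally finite case) is exactly that derivation with the details filled in. Your geodesic-ray argument via breadth-first distance classes, where any chord $u_iu_j$ would force $j \le i+1$, is a clean and correct way to supply the inducedness step the paper leaves implicit.
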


\begin{theorem}[{\cite[(5.3)]{Unavoidableminors3connbinmatroids} or \cite[Proposition 9.4.1]{diestel}}]\label{thm:indconnfin}
For every positive integer $r$, there is an integer $\fref{thm:indconnfin}(r)$
such that every connected graph on at least $\fref{thm:indconnfin}(r)$ vertices
contains $K_r$, $K_{1,r}$, or $P_r$ as an induced subgraph.
\end{theorem}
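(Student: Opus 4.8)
The plan is to combine Ramsey's theorem (\cref{thm:ramseyfin}) with a shortest-path argument, splitting into cases according to whether $G$ has a vertex of large degree. Two elementary facts drive the proof. First, a shortest path between two vertices is always an \emph{induced} path: any chord would produce a strictly shorter path, contradicting minimality. Second, in a graph of bounded maximum degree a ball of bounded radius is small, so having many vertices forces a large diameter. It is the first fact that upgrades the conclusion from a mere subgraph (as in \cref{thm:konigfin}) to an induced subgraph.

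First I would set $M = \fref{thm:ramseyfin}(r)$ and split on whether some vertex has degree at least $M$. Suppose $v$ is such a vertex, and choose $S \subseteq N(v)$ with $|S| = M$. Applying \cref{thm:ramseyfin} to the induced subgraph $G[S]$, I obtain either an induced $K_r$, in which case we are immediately done, or an independent set $I \subseteq S$ with $|I| = r$. In the latter case every vertex of $I$ is adjacent to $v$ while no two vertices of $I$ are adjacent, so $G[\{v\} \cup I]$ is exactly $K_{1,r}$, an induced star. Thus the high-degree case always yields $K_r$ or $K_{1,r}$.

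Otherwise $\Delta(G) \le M - 1$. Running a breadth-first search from an arbitrary vertex $u$, the number of vertices within distance $r-2$ of $u$ is at most $1 + (M-1) + (M-1)^2 + \dots + (M-1)^{r-2}$, a bound depending only on $r$. Hence if $G$ has more than this many vertices, some vertex $w$ lies at distance at least $r-1$ from $u$, and a shortest path from $u$ to $w$ has at least $r$ vertices. By the first fact this path is induced, and any $r$ consecutive vertices of it induce $P_r$. Taking $\fref{thm:indconnfin}(r)$ to be one more than the displayed ball size then guarantees one of $K_r$, $K_{1,r}$, or $P_r$ in every case, with $\fref{thm:indconnfin}(r)$ coming out roughly $(\fref{thm:ramseyfin}(r))^{r-1}$.

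The argument is largely routine once the case split is chosen; the only points requiring genuine care are verifying that a shortest path is chordless and checking that the independent set delivered by Ramsey inside $N(v)$ produces an induced star rather than merely a large set of common neighbors. I expect the main conceptual step, rather than any serious obstacle, to be recognizing that these two very different mechanisms—a Ramsey argument on a neighborhood and a metric argument on balls—together cover all connected graphs, and that bounded maximum degree is exactly the hypothesis that converts ``many vertices'' into ``large diameter.''
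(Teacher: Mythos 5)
Your proposal is correct and follows essentially the same route as the proof the paper relies on (it cites \cite[Proposition 9.4.1]{diestel} and notes the result ``can be proved using \cref{thm:ramseyfin}''): apply Ramsey's theorem inside a large neighborhood to get an induced $K_r$ or $K_{1,r}$, and otherwise use bounded maximum degree to force a vertex at distance at least $r-1$, whose shortest path --- chordless, hence induced --- yields $P_r$. The only trifle is the degenerate cases $r \le 2$, which are trivial, and your bound of roughly $\left(\fref{thm:ramseyfin}(r)\right)^{r-1}$ agrees with the standard one.
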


We describe the results on unavoidable induced subgraphs for $2$-connected graphs, due to the first author, Ding, and Oporowski \cite{unavoidableinducedsubgraphs,unavoidableinfinducedsubgraphs}, in \cref{ss:highconn}.

\subsection{Main results}\label{ss:mainresults}
In a particular class of graphs, we desire first that the unavoidable substructures are still in the class of graphs, and secondly, if possible the unavoidable substructures are minimal with respect to membership in the graph class.
By \cref{thm:ramseyfin}, large complete graphs will be in the list of unavoidable induced subgraphs no matter the connectivity requirement, even though they are not minimal.

In order to state our results we need some appropriate definitions and notation.

An \emph{$r$-flower}, where $r \ge 1$ or $r = \infty$, consists of $r$ edge-disjoint induced cycles that have a single common vertex; see \cref{fig:rflower}.  
(In our figures thin line segments represent single edges, while thick line segments represent paths, which may be a single edge.)
In our characterization, an $r$-flower can be regarded as an analog of a star $K_{1,r}$.  We have a second analog of stars: $\Theta_r$, where $r \ge 3$ is an integer or $r = \infty$, is the class of graphs that consist of $r$ internally disjoint paths between two specified vertices.

For a path $P$, we denote the subpath of $P$ with initial vertex $u$ and final vertex $v$ by $P[u,v]$, and we denote the subpath $P[u,v]-\{u,v\}$ by $P(u,v)$ (which is empty if $u=v$).
The subpaths $P(u,v]$ and $P[u,v)$ are defined analogously.

A \emph{pinched ladder} is a triple $(L,P,Q)$ where
(1) $L$ is a graph and $P$ and $Q$ are two paths in $L$,
(2) $V(L) = V(P)\cup V(Q)$, 
(3) $P \cap Q$ consists of two vertices $\sigma$ and $\tau$, the \emph{initial} and \emph{final} vertices of $L$, respectively, which are also the endvertices of $P$ and $Q$, and 
(4) either $P$ is a single edge $e$ and $Q=L-\{e\}$ (or vice versa), or $P$ and $Q$ are both induced paths.
If either $P$ or $Q$ is a single edge then the pinched ladder is a cycle.
The \emph{rails} of a pinched ladder are $P(\sigma,\tau)$ and $Q(\sigma,\tau)$ (one of which may be empty).
Thus, we may suppose that $P = p_0 p_1 p_2 \dots p_\ell p_{\ell+1}$ and $Q = q_0 q_1 q_2 \dots q_m q_{m+1}$ where $p_0 = q_0 = \sigma$, $p_{\ell+1} = q_{m+1} = \tau$.  Then the rails are $p_1 p_2 \dots p_{\ell}$ (which is empty if $\ell=0$) and $q_1 q_2 \dots q_m$ (which is empty if $m=0$).
The edges of $L$ that belong to neither $P$ nor $Q$ are called \emph{rungs}.  Note that no rungs are incident with $\sigma$ or $\tau$; each rung has one end on each rail.
Two rungs $p_a q_b$ and $p_cq_d$ \emph{cross} if $a<c$ and $d<b$.
We also say that $\{p_aq_b,p_cq_d\}$ is a \emph{cross} whose \emph{$P$-span} is $P[p_a,p_c]$, and whose \emph{$Q$-span} is $Q[q_d,q_b]$; the \emph{span} is the union of the $P$-span and the $Q$-span. 
A cross whose $P$-span and $Q$-span are both single edges is called \emph{trivial}.
The edges incident with $\sigma$ or $\tau$ (or both) are \emph{terminal edges}.  Every edge of $L$ is a rail edge, a rung, or a terminal edge. 

\begin{figure}[ht]
  \begin{center}
    \begin{subfigure}[b]{.19\textwidth}
    \begin{center}
    \begin{tikzpicture}
        [scale=.8,auto=left,every node/.style={circle, fill, inner sep=0pt, minimum size=2mm, outer sep=-.2pt}]
        \node[fill=black!37!white] (a) at (1,0) {};
        \begin{scope}[on background layer]
        \draw[line width=.8mm] (a) to [out=0,in=60, looseness=40] (a);
         \draw[line width=.8mm] (a) to [out=72,in=132, looseness=40] (a);
          \draw[line width=.8mm] (a) to [out=144,in=204, looseness=40] (a);
           \draw[line width=.8mm] (a) to [out=216,in=276, looseness=40] (a);
           \draw[line width=.8mm] (a) to [out=288,in=348, looseness=40] (a);
           \end{scope}
        \end{tikzpicture}
        \end{center}
        \setlength{\abovecaptionskip}{-15pt} 
    \caption{A $5$-flower}
    \label{fig:rflower}
    \end{subfigure}
    \hspace{1cm}
    \begin{subfigure}[b]{.63\textwidth}
    \begin{center}
    \begin{tikzpicture}
        [scale=.8,auto=left,every node/.style={circle, fill, inner sep=0pt, minimum size=1.5mm, outer sep=0pt},line width=.4mm, line join=bevel]
        \node[minimum size=2mm] (u1) at (0.3,3) [label=left:$\sigma$] {};
        \node[minimum size=2mm] (u2) at (11.3,3) [label=right:$\tau$] {};
        \node (x0) at (1,4) {};
        \node (x2) at (1.6,4) {};
        \node (x4) at (2.2,4) {};
        \node (x6) at (2.8,4) {};
        \node (x8) at (3.4,4) {};
        \node (x10) at (4,4) {};
        \node (x12) at (4.6,4) {};
        \node (x14) at (5.2,4) {};
        \node (x16) at (5.8,4) {};
        \node (x18) at (6.4,4) {};
        \node (x20) at (7,4) {};
        \node (x22) at (7.6,4) {};
        \node (x24) at (8.2,4) {};
        \node (x26) at (8.8,4) {};
        \node (x28) at (9.4,4) {};
        \node (x30) at (10,4) {};
        \node (x32) at (10.6,4) {};
        \node (y0) at (1,2) {};
        \node (y4) at (2.2,2) {};
        \node (y6) at (2.8,2) {};
        \node (y9) at (3.7,2) {};
        \node (y11) at (4.3,2) {};
        \node (y13) at (4.9,2) {};
        \node (y16) at (5.8,2) {};
        \node (y18) at (6.4,2) {};
        \node (y20) at (7,2) {};
        \node (y22) at (7.6,2) {};
        \node (y24) at (8.2,2) {};
        \node (y26) at (8.8,2) {};
        \node (y28) at (9.4,2) {};
        \node (y30) at (10,2) {};
        \node (y32) at (10.6,2) {};
 
        \begin{scope}[on background layer]
        \draw[line width=.4mm] (y0.center) to (u1.center) to (x0.center) to (y0.center);
        \draw[line width=.4mm] (x0.center) to (x2.center);
        \draw[line width=.8mm] (x2.center) to (x4.center);
        \draw[line width=.4mm] (x4.center) to (x6.center);
        \draw[line width=.8mm] (x6.center) to (x8.center);
        \draw[line width=.4mm] (x8.center) to (x14.center);
        \draw[line width=.8mm] (x14.center) to (x16.center);
        \draw[line width=.4mm] (x16.center) to (x18.center);
        \draw[line width=.8mm] (x18.center) to (x20.center);
        \draw[line width=.4mm] (x20.center) to (x22.center);
        \draw[line width=.8mm] (x22.center) to (x24.center);
        \draw[line width=.4mm] (x24.center) to (x26.center);
        \draw[line width=.8mm] (x26.center) to (x28.center);
        \draw[line width=.4mm] (x28.center) to (x30.center);
        \draw[line width=.8mm] (x30.center) to (x32.center);
        \draw[line width=.4mm] (x32.center) to (u2.center) to (y32.center);
        \draw[line width=.8mm] (y32.center) to (y30.center);
        \draw[line width=.4mm] (y30.center) to (y28.center);
        \draw[line width=.8mm] (y28.center) to (y26.center);
        \draw[line width=.4mm] (y26.center) to (y24.center);
        \draw[line width=.8mm] (y24.center) to (y22.center);
        \draw[line width=.4mm] (y22.center) to (y20.center);
        \draw[line width=.8mm] (y20.center) to (y18.center);
        \draw[line width=.4mm] (y18.center) to (y16.center);
        \draw[line width=.8mm] (y16.center) to (y13.center);
        \draw[line width=.4mm] (y13.center) to (y9.center);
        \draw[line width=.8mm] (y9.center) to (y6.center);
        \draw[line width=.4mm] (y6.center) to (y4.center);
        \draw[line width=.8mm] (y4.center) to (y0.center);
        \draw[line width=.4mm, line join=bevel] (y0.center) to (x2.center);
        \draw[line width=.4mm, line join=bevel] (x4.center) to (x6.center)to (y4.center) to (y6.center) to (x4.center);
        \draw[line width=.4mm, line join=bevel] (x8.center) to (y9.center)to (x10.center) to (y11.center) to (x12.center) to (y13.center) to (x14.center);
        \draw[line width=.4mm, line join=bevel] (x16.center) to (y16.center) to (x18.center) to (y18.center) to (x16.center);
        \draw[line width=.4mm, line join=bevel] (x20.center) to (y22.center) to (x22.center) to (y20.center);
        \draw[line width=.4mm,line join=bevel] (y24.center) to (x24.center) to (y26.center) to (x26.center);
        \draw[line width=.4mm,line join=bevel] (x28.center) to (y28.center) to (x30.center) to (y30.center);
        \end{scope}

    \end{tikzpicture}
    \end{center}
    \caption{A super-clean pinched ladder}
    \label{fig:clchain}
    \end{subfigure}
    \caption{Graphs from \cref{thm:2econ}}
    \label{fig:2econgraphs}
  \end{center}
\end{figure}

A \emph{fan} is a graph consisting of a vertex $u$ called the \emph{apex}, a path $v_1 v_2 \dots v_k$ with $k \ge 2$ called the \emph{rim}, and edges joining $u$ to $v_1$, $v_k$, and an arbitrary subset of $\{v_2, v_3, \dots, v_{k-1}\}$.
An fan is \emph{trivial} if $k=2$, i.e., it is a triangle.
An \emph{embedded fan} is an induced subgraph of a pinched ladder (or later, a ladder) that is a fan, where the apex $u$ is a vertex of one rail and the rim is the subpath of the second rail between the first and last neighbors of $u$ on that rail.  (Thus, an embedded fan is maximal: it cannot be extended to a larger embedded fan.)
We frequently abbreviate ``embedded fan'' to ``fan'' if the meaning is clear from context.

We can now describe a type of subgraph that is fundamental for $2$-edge-connected graphs.
A \emph{super-clean pinched ladder} is a pinched ladder in which all crosses and embedded fans are trivial.  See \cref{fig:clchain}.  A super-clean pinched ladder has induced subgraphs that consist of (1) a cross and any other rungs induced by its endpoints, or (2) a maximal sequence of trivial fans, where consecutive elements intersect in a rung (these zigzag between the rails).  All subgraphs of these two types are vertex-disjoint.  

A \emph{block} $B$ of a graph $G$ is a maximal connected subgraph of $G$ with no cutvertex.  Each block is a single vertex, a cutedge, or is $2$-connected, so in a $2$-edge-connected graph all blocks are $2$-connected.
The \emph{block-cutvertex tree} of a graph $G$ is a tree $T$ where each cutvertex $u$ of $G$ is a vertex of $T$, for each block $B_i$ of $G$ there is a vertex $v_i$ of $T$, and $uv_i \in E(T)$ whenever $u \in V(B_i)$.  Every leaf of $T$ corresponds to a block in $G$.

A \emph{chain of blocks} of \emph{length $n$}, or just \emph{chain of $n$ blocks}, is a graph with $n$ blocks whose block-cutvertex tree is a path.  In this case we call the cutvertices \emph{joining vertices}.  We can refer to a \emph{chain of cycles} or \emph{chain of triangles} if every block is a cycle or triangle, respectively.
A \emph{chain of super-clean pinched ladders} $H$ is a chain of blocks with blocks $L_1, L_2, \dots, L_n$ such that each $L_i$ is a super-clean pinched ladder with initial vertex $u_i$ and final vertex $u_{i+1}$.  Thus, $u_2, u_3, \dots, u_n$ are the joining vertices.  We call $u_1$ and $u_{n+1}$ the \emph{initial} and \emph{final} vertices of $H$, respectively.  See \cref{fig:mclchain}.  Large super-clean pinched ladders and long chains of super-clean pinched ladders can be thought of as analogs of a long path $P_r$.

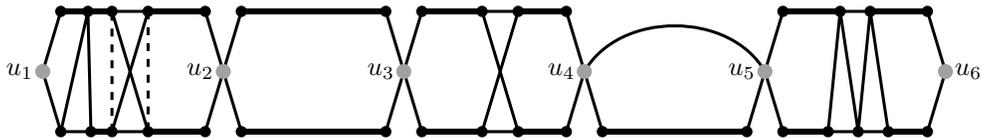
\begin{figure}[htb]
\vspace{5pt}
\begin{center}
    \begin{tikzpicture}
        [scale=.8,auto=left,every node/.style={circle, fill, inner sep=0pt, minimum size=1.5mm, outer sep=0pt},line width=.8mm]
        \node[minimum size=2mm,black!37!white] (u1) at (1,3) [label=left:$u_1$] {};
        \node[minimum size=2mm,black!37!white] (u2) at (4,3) [label=left:$u_2$] {};
        \node[minimum size=2mm,black!37!white] (u3) at (7,3) [label=left:$u_3$] {};
        \node[minimum size=2mm,black!37!white] (u4) at (10,3) [label=left:$u_4$] {};
        \node[minimum size=2mm,black!37!white] (u5) at (13,3) [label=left:$u_5$] {};
        \node[minimum size=2mm,black!37!white] (u6) at (16,3) [label=right:$u_6$]{};
        \node (x1) at (1.3,4) {};  \node (x2) at (1.75,4){};
        \node (x3) at (2.15,4) {};
        \node (x4) at (2.75, 4) {};
        \node (x6) at (3.7,4) {};
        \node (x7) at (4.3,4) {};
        \node (x8) at (6.7,4){};
        \node (x9) at (7.3,4) {}; 
        \node (x10) at (8.3,4){};
        \node (x11) at (8.9, 4) {};
        \node (x12) at (9.7,4) {};
        \node (x14) at (14.25,4) {};
        \node (x15) at (14.75,4) {};
        \node (x16) at (15.7,4) {};
        \node (x17) at (13.3,4){};
        \node (y1) at (1.3,2){}; \node (y2) at (1.8,2){}; \node (y3) at (2.15,2){};\node (y4) at (2.75,2){}; \node (y5) at (3.7,2) {}; \node (y6) at (4.3,2) {}; \node (y7) at (6.7,2) {}; \node (y8) at (7.3,2){}; \node (y9) at (8.3,2) {}; \node (y10) at (8.9,2){}; \node (y11) at (9.7,2) {}; \node (y12) at (10.3,2) {}; \node (y13) at (14.55,2) {}; \node (y14) at (15.7,2){};
        \node (y15) at (15.05,2){};\node (y16) at (14.05,2){};
        \node (y17) at (13.3,2){};
        \node (y18) at (12.7,2){};

        \begin{scope}[on background layer]
        \draw[line width=.4mm] (u1.center) to (x1.center);
        \draw [line width=.8mm] (x1.center)to (x2.center) to (x3.center);
        \draw[line width=.4mm] (x3.center) to (x4.center);
        \draw[line width=.8mm] (x4.center) to (x6.center);
        \draw[line width=.4mm] (x6.center) to (u2.center) to (x7.center);
        \draw[line width=.8mm] (x7.center) to (x8.center);
        \draw[line width=.4mm] (x8.center) to (u3.center) to (x9.center);
        \draw[line width=.8mm] (x9.center)to (x10.center);
        \draw[line width=.4mm] (x10.center) to (x11.center);
        \draw[line width=.8mm](x11.center) to (x12.center);
        \draw[line width=.4mm] (x12.center) to (u4.center);
        \draw[bend left=60, line width=.4mm] (u4.center) to (u5.center);
        \draw[line width=.4mm] (u5.center) to (x17.center);
        \draw[line width=.8mm] (x17.center) to (x14.center);
        \draw[line width=.4mm] (x14.center) to (x15.center);
        \draw[line width=.8mm] (x15.center) to (x16.center);\draw[line width=.4mm](x16.center) to (u6.center);

        \draw [line width=.4mm](u1.center) to (y1.center) to (y2.center); 
        \draw[line width=.8mm] (y2.center) to (y3.center); \draw[line width=.4mm] (y3.center) to (y4.center); \draw[line width=.8mm] (y4.center) to (y5.center); \draw[line width=.4mm](y5.center) to (u2.center) to (y6.center);
        \draw[line width=.8mm] (y6.center) to (y7.center);\draw[line width=.4mm](y7.center) to (u3.center) to (y8.center);\draw[line width=.8mm](y8.center) to (y9.center); \draw[line width=.4mm] (y9.center) to (y10.center); \draw[line width=.8mm] (y10.center) to (y11.center);\draw[line width=.4mm] (y11.center) to (u4.center) to (y12.center);\draw[line width=.8mm](y12.center) to (y18.center);\draw[line width=.4mm](y18.center) to (u5.center) to (y17.center);\draw[line width=.8mm] (y17.center)to (y16.center);
        \draw[line width=.4mm] (y16.center) to (y13.center) to (y15.center);
        \draw[line width=.8mm] (y15.center) to (y14.center);\draw[line width=.4mm](y14.center) to (u6.center);
        \draw[line width=.4mm, line join=bevel] (y1.center) to (x2.center) to (y2.center); 
        \draw[line width=.4mm, dashed] (y3.center) to (x3.center);
        \draw[line width=.4mm] (x3.center) to (y4.center); 
        \draw[line width=.4mm,dashed] (x4.center) to (y4.center);
        \draw[line width=.4mm] (x4.center) to (y3.center);
        \draw[line width=.4mm] (x10.center) to (y10.center); \draw[line width=.4mm] (x11.center) to (y9.center); \draw[line width=.4mm, line join=bevel](y15.center) to (x15.center) to (y13.center) to (x14.center) to (y16.center); 
        \end{scope}
    \end{tikzpicture}
\end{center}
\caption{A chain of $5$ super-clean pinched ladders}
\label{fig:mclchain}
\end{figure}

The following shows that chains of pinched super-clean ladders occur everywhere in $2$-edge-connected graphs.  Thus, it is very natural that chains of pinched super-clean ladders play a significant role in \cref{thm:2econ,thm:inf2econ} below.  But \cref{lem:cleaningblocks} applies to all $2$-edge-connected graphs, not just large or infinite ones, and it may have other applications in future.

\begin{theorem}\label{lem:cleaningblocks}
Let $G$ be a $2$-edge-connected finite or infinite graph with distinct vertices $u$ and $v$.
Then $G$ contains a chain of super-clean pinched ladders with initial vertex $u$ and final vertex $v$ as an induced subgraph.
\end{theorem}
\begin{proof}
Since $G$ is $2$-edge-connected there are two edge-disjoint $uv$-paths.  Let $P$ and $Q$ be two such paths such that (1) $|E(P)|+|E(Q)|$ is minimum, and subject to that (2) $|V(P)\cup V(Q)|$ is minimum.
If one of $P$ or $Q$ is just the edge $uv$, then (1) implies that $P \cup Q$ is an induced cycle, which is the required subgraph.  So we may assume this is not the case, and then (1) implies that $P$ and $Q$ are induced paths.
Vertices of $V(P) \cap V(Q)$ occur along $P$ in the same order as they occur along $Q$, otherwise we could find two paths with fewer edges, contradicting (1).
Let the elements of $V(P)\cap V(Q)$ be $u_1, u_2,\dots, u_{n+1}$ in order along $P$ (or $Q$), so that $u_1=u$ and $u_{n+1}=v$.
Any edge from $P(u_i,u_{i+1})$ to $Q(u_j,u_{j+1})$ has $i=j$, otherwise we could find two paths with fewer edges, contradicting (1).
Let $P_i = P[u_i,u_{i+1}]$, $Q_i = Q[u_i,u_{i+1}]$, and let $L_i$ be the subgraph induced by $V(P_i) \cup V(Q_i)$.  Then $(L_i, P_i, Q_i)$ is a pinched ladder.
A nontrivial cross in $L_i$ allows us to find two paths with fewer edges, contradicting (1).
A nontrivial embedded fan in $L_i$ allows us to find two paths that either have fewer edges, contradicting (1), or have the same number of edges but one fewer vertex, contradicting (2).
Thus, each $L_i$ is a super-clean pinched ladder, and so the subgraph of $G$ induced by $V(P)\cup V(Q)$ is a chain of super-clean pinched ladders, as desired.
\end{proof}

While \cref{lem:cleaningblocks} gives existence of chains of super-clean pinched ladders, it does not require the chain to have at least a particular order, and so we will still need to determine this for \cref{thm:2econ}.
In \cref{ss:highconn} we state a result similar to \cref{lem:cleaningblocks} for $2$-connected graphs.

Our first unavoidability result describes the unavoidable induced subgraphs of finite $2$-edge-connected graphs. Note that for results on finite graphs, these results are existence results.  As such, bounds are chosen for brevity and clarity of proofs, and we make no attempt to optimize our bounds.  Therefore we do not provide an explicit formula for the overall bound $\fref{thm:2econ}(r)$ in \cref{thm:2econ}, although the reader may compute such a formula from the details of our proofs if desired.

\begin{restatable}{theorem}{thmtwoecon}\label{thm:2econ}
For every integer $r\ge 3$, there is an integer $\fref{thm:2econ}(r)$ such that every $2$-edge-connected graph of order at least $\fref{thm:2econ}(r)$ contains $K_r$, an $r$-flower, a super-clean pinched ladder of order at least $r$, a chain of $r$ super-clean pinched ladders, or a member of the family $\Theta_r$ as an induced subgraph.
\end{restatable}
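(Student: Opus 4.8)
The plan is to argue through the block structure of $G$. Since $G$ is $2$-edge-connected, every block is $2$-connected, and the blocks are organized by the block-cutvertex tree $T$. First I would show that if $|V(G)|$ is large enough then at least one of three situations occurs: (a) some block $B$ has order at least $g(r)$ for a suitable function $g$; (b) some cutvertex lies in at least $r$ blocks; or (c) $T$ contains a path meeting at least $r$ block-nodes. This is a routine pigeonhole/tree argument: the $T$-degree of a block-node is at most the order of that block, and the $T$-degree of a cutvertex-node is its block-degree, so if every block is small and no cutvertex has large block-degree then $T$ has bounded maximum degree; since bounded block sizes force $|V(T)|$ to be large, a bounded-degree tree that large contains a long path, which is exactly (c). Each of (a)--(c) will be matched to one family of unavoidable subgraphs.

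In case (a) I would feed the large $2$-connected block $B$ into \cref{thm:2con}: as $B$ is an induced subgraph of $G$, it contains $K_r$, a member of $\Theta_r$, or a large clean ladder $L\in\Lambda_{r'}$ as an induced subgraph, and the first two outcomes already appear in our list. The work is then to convert a large clean ladder into one of our minimally $2$-edge-connected targets. Here the \emph{super-cleaning algorithm} enters: running it on $L$ either trivializes all crosses and embedded fans to produce a super-clean pinched ladder of order at least $r$, or it exposes an embedded fan with a long rim. A long embedded fan, however, contains an induced $r$-flower: choosing triangles $u\,v_i\,v_{i+1}$ on the apex $u$ with rim-indices spaced at least three apart makes the selected triangles edge-disjoint and pairwise nonadjacent off $u$, hence their union is an induced $r$-flower.

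For the discrete cases I would use cutvertices directly. In case (b), let the cutvertex $u$ lie in blocks $B_1,\dots,B_r$. Each $B_i$ is $2$-connected, so it contains an induced cycle $C_i$ through $u$ (a shortest cycle through $u$ is chordless). Distinct blocks meet only in cutvertices, so there are no edges between $C_i-u$ and $C_j-u$ for $i\ne j$; thus $\bigcup_i C_i$ is an induced $r$-flower. In case (c), the path in $T$ yields a chain of at least $r$ consecutive $2$-connected blocks sharing the joining cutvertices; applying the cleaning of \cref{lem:cleaningblocks} block-by-block, I would replace each block by an induced super-clean pinched ladder whose initial and final vertices are the appropriate joining cutvertices, and assemble these into an induced chain of $r$ super-clean pinched ladders.

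The main obstacle is precisely the conversion of blocks and clean ladders into \emph{super-clean} pinched ladders, that is, \cref{lem:cleaningblocks} together with the super-cleaning algorithm. Trivializing crosses is comparatively easy, since clean ladders already have only trivial crosses, but embedded fans are the real difficulty: a block may contain arbitrarily large fans, and these are exactly the configurations that fail to be minimally $2$-edge-connected, so they must either be thinned to triangles to preserve super-cleanliness or diverted into the $r$-flower outcome. Arranging this reduction so that it yields simultaneously both long single super-clean pinched ladders (for large blocks) and consistent per-block super-clean pinched ladders with prescribed endpoints (for the chain), all while keeping the output induced and $2$-edge-connected, is where the bulk of the technical effort will lie.
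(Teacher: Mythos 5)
Your skeleton is essentially the paper's: your trichotomy (large block; cutvertex in many blocks; long path of block-nodes in $T$) is what the paper gets from \cref{lem:largecutverttree} together with \cref{thm:indconnfin} applied to the block-cutvertex tree (your bounded-degree pigeonhole on $T$ is a fine substitute); your case (b) is verbatim the paper's flower argument; and your case (c) is \cref{lem:pathtobc,lem:bctopsclc}, with the harmless variation that applying \cref{lem:cleaningblocks} block-by-block may yield a short chain rather than a single pinched ladder inside each block, which still concatenates into an induced chain of at least $r$ super-clean pinched ladders since distinct blocks meet only in cutvertices.

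The genuine gap is in case (a), where your dichotomy for the super-cleaning step is wrong in two ways. First, the claim that ``a long embedded fan contains an induced $r$-flower'' is false: a fan whose apex has only a few neighbors but a long rim segment between two consecutive neighbors contains no large flower at all (edge-disjoint cycles through the apex consume distinct pairs of spokes, so their number is bounded by half the number of spokes); the correct outcome in that situation is a long induced cycle, i.e.\ a super-clean pinched ladder of order at least $r$ --- this is the ``$d_i \ge r-2$'' branch of the paper's \cref{lem:super-cleaning}, which your argument is missing. Second, even when every embedded fan is small, super-cleaning need not produce a \emph{single} super-clean pinched ladder of order at least $r$: every nontrivial fan apex becomes a joining cutvertex of the output, so a large clean ladder containing many small nontrivial fans is converted into a chain of many short pinched ladders, and you must allow the ``chain of $r$ super-clean pinched ladders'' outcome in case (a) as well; your closing remark that large blocks should yield ``long single super-clean pinched ladders'' indicates this is a real misconception rather than a slip. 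What closes the case in the paper is a counting argument you omit entirely: once every fan has at most $3r-2$ spokes and rim gaps of length at most $r-3$, each fan has at most $R=(3r-3)(r-3)+1$ rim vertices, so each cleaning step keeps at least a $2/R$ fraction of the vertices whose status it decides, whence the output chain $H$ satisfies $|V(H)| \ge (2/R)|V(L)|+1$; a large $H$ must then have either a block of order at least $r$ or at least $r$ blocks. Without this accounting (or some substitute for it), nothing guarantees that the super-cleaned object is large, and your case (a) does not close. (A minor further point: your ``triangles $u\,v_i\,v_{i+1}$'' should be cycles, since consecutive fan neighbors need not be adjacent along the rail.)
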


We also prove the infinite analog of \cref{thm:2econ}.
We replace each graph in the above theorem with its a corresponding infinite graph and some additional ladder-like families of graphs.  
One is an \emph{infinite super-clean pinched ladder}, which is a triple $(L, P, Q)$ such that (1) $L$ is a locally finite graph and $P$ and $Q$ are rays in $L$, (2) $V(L) = V(P) \cap V(Q)$, (2) $P \cap Q$ is a single vertex $\sigma$, the \emph{initial vertex} of $L$, (4) $P$ and $Q$ are induced paths, and (5) relative to the \emph{rails} $P-\{\sigma\}$ and $Q-\{\sigma\}$ there are infinitely many rungs, and all crosses and embedded fans are trivial.  See \cref{fig:inf1pscl}.
The second possibility is a \emph{one-way infinite chain of finite super-clean pinched ladders}, which is a graph $H$ with blocks $L_1, L_2, L_3, \dots$ such that each $L_i$ is a finite super-clean pinched ladder with initial vertex $u_i$ and final vertex $u_{i+1}$.  Thus, $u_2, u_3, u_4, \dots$ are the cutvertices of $H$, which we call \emph{joining vertices}.  We call $u_1$ the \emph{initial} vertex of $H$.  See \cref{fig:infpsclc}.

\begin{figure}[ht]
\centering
       \begin{subfigure}[b]{.45\textwidth}
        \begin{center}
    \begin{tikzpicture}
        [scale=1,auto=left,every node/.style={circle, fill, inner sep=0pt, minimum size=1.75mm, outer sep=0pt},line width=.85mm]
        \node[minimum size=2mm,black!37!white] (u1) at (1,2) [label=left:$\sigma$] {};
        \node (x1) at (1.3,3) {};  \node (x2) at (1.75,3){};
        \node (x3) at (2.15,3) {};
        \node (x4) at (2.75, 3) {};
        \node (y1) at (1.3,1){}; \node (y2) at (1.8,1){}; \node (y3) at (2.15,1){};\node (y4) at (2.75,1){};  
        \begin{scope}[on background layer]
        \draw[line width=.85mm] (u1.center) to (x1.center) to (x2.center) to (x3.center);
        \draw[line width=.35mm] (x3.center) to (x4.center);
        \draw[line width=.85mm] (x4.center) to (4,3); 
        \draw [line width=.85mm](u1.center) to (y1.center); \draw[line width=.35mm] (y1.center) to (y2.center); \draw[line width=.85mm] (y2.center) to (y3.center); \draw[line width=.35mm] (y3.center) to (y4.center); \draw[line width=.85mm] (y4.center) to (4,1); 
        \draw[line width=.35mm, line join=bevel] (y1.center) to (x2.center) to (y2.center); 
        \draw[line width=.35mm, dashed] (y3.center) to (x3.center);
        \draw[line width=.35mm] (x3.center) to (y4.center); 
        \draw[line width=.35mm,dashed] (x4.center) to (y4.center);
        \draw[line width=.35mm] (x4.center) to (y3.center);
        \draw[dotted, line width=.4mm] (3.3,2) to (3.8,2);
        \end{scope}
    \end{tikzpicture}
   \end{center}
    \caption{A one-way infinite super-clean pinched ladder}
    \label{fig:inf1pscl}
    \end{subfigure}
    \begin{subfigure}[b]{.54\textwidth}
       \begin{center}
    \begin{tikzpicture}
        [scale=1,auto=left,every node/.style={circle, fill, inner sep=0pt, minimum size=1.75mm, outer sep=0pt},line width=.85mm]
        \node[minimum size=2mm,black!37!white] (u1) at (1,3) [label=left:$u_1$] {};
        \node[minimum size=2mm,black!37!white] (u2) at (4,3) [label=left:$u_2$] {};
        \node[minimum size=2mm,black!37!white] (u3) at (7,3) [label=left:$u_3$] {};
        \node (x1) at (1.3,4) {};  \node (x2) at (1.75,4){};
        \node (x3) at (2.15,4) {};
        \node (x4) at (2.75, 4) {};
        \node (x6) at (3.7,4) {};
        \node (x7) at (4.3,4) {};
        \node (x8) at (5,4) {};
        \node (x9) at (5.75,4) {};
        \node (x10) at (6.7,4){};
        \node (x11) at (7.3,4) {}; 
        \node (y1) at (1.3,2){}; \node (y2) at (1.8,2){}; \node (y3) at (2.15,2){};\node (y4) at (2.75,2){}; \node (y5) at (3.7,2) {}; \node (y6) at (4.3,2) {}; 
        \node (y7) at (5,2){};
        \node (y8) at (5.6,2){};
        \node (y9) at (6.2,2){};
        \node (y10) at (6.7,2) {};
        \node (y11) at (7.3,2){}; 
        \begin{scope}[on background layer]
        \draw[line width=.85mm] (u1.center) to (x1.center) to (x2.center) to (x3.center);
        \draw[line width=.35mm] (x3.center) to (x4.center);
        \draw[line width=.85mm] (x4.center) to (x6.center) to (u2.center) to (x7.center) to (x8.center);
        \draw[line width=.85mm] (x9.center) to (x10.center) to (u3.center);
        \draw[ line width=.85mm] (u3.center) to (x11.center) to (8.3,4);
        \draw [line width=.85mm](u1.center) to (y1.center); \draw[line width=.35mm] (y1.center) to (y2.center); \draw[line width=.85mm] (y2.center) to (y3.center); \draw[line width=.4mm] (y3.center) to (y4.center); \draw[line width=.85mm] (y4.center) to (y5.center) to (u2.center) to (y6.center) to (y7.center);
        \draw[line width=.85mm] (y9.center) to  (y10.center) to (u3.center) to (y11.center) to (8.3,2);

        \draw[line width=.35mm] (y1.center) to (x2.center); 
        \draw[line width=.35mm, dashed] (y3.center) to (x3.center);
        \draw[line width=.35mm] (x2.center) to (y2.center);
        \draw[line width=.35mm] (x3.center) to (y4.center); 
        \draw[line width=.35mm,dashed] (x4.center) to (y4.center);
        \draw[line width=.35mm] (x4.center) to (y3.center);
        \draw[line width=.35mm] (x8.center) to (y7.center);
        \draw[line width=.35mm] (x8.center) to (y8.center) to (y9.center);
       \draw[line width=.35mm](y7.center) to (y8.center) to (x9.center) to (x8.center);
       \draw[line width=.35mm] (x9.center) to (y9.center);
       \draw[dotted, line width=.4mm] (7.5,3) to (8,3);

        \end{scope}
    \end{tikzpicture}
\end{center}
\caption{A one-way infinite chain of super-clean pinched ladders}
\label{fig:infpsclc}
\end{subfigure}
   \caption{Graphs from \cref{thm:inf2econ}}
   \label{fig:infpscls}
\end{figure}

We also need ladder-like structures, illustrated in \cref{fig:ladders}.
The ladder $L_\infty$ consists of two disjoint induced rays, $P=p_1 p_2\dots$ and $Q=q_1 q_2\dots$, called \emph{rails} and edges $p_iq_i$ for each $i \in \{1,2,3,\dots\}$.
Let $\mathscr{L}_{\infty}$ be the family of graphs obtained from $L_{\infty}$ by subdividing each $p_iq_i$ at least once and each of the rail edges an arbitrary number, possibly zero, of times; see \cref{fig:L1inf}.
Let $\mathscr{L}^{\Delta}_{\infty}$ be the family of graphs obtained from $L_\infty$ by replacing every vertex of one rail with a triangle, and then arbitrarily subdividing each edge not in such a triangle; see \cref{fig:L2inf}.
Let $\mathscr{L}^{\nabla\Delta}_{\infty}$ be the family of graphs obtained from $L_{\infty}$ by replacing every vertex with a triangle, and the arbitrarily subdividing each edge not in such a triangle; see \cref{fig:L3inf}.  

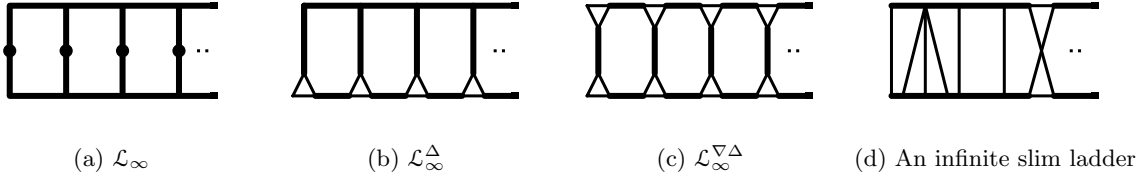
\begin{figure}[!htb]
    \vspace{3pt}
	\begin{subfigure}[b]{.3\textwidth}
		\begin{center}
			\begin{tikzpicture}
				[scale=.6,auto=left,every node/.style={circle, fill, inner sep=0 pt, minimum size=1.75mm, outer sep=0pt},line width=.8mm,line join=round]
				\draw (1,1)--(1,3);
                    \node (1) at (1,2){};
                    \node (2) at (2.25,2){};
                    \node (3) at (3.5,2){};
                    \node (4) at (4.75,2){};
				\draw[line cap=round, -Butt Cap] (1,1)--(5.6,1);
				\draw[line cap=round, -Butt Cap](1,3)--(5.6,3);
				\draw (2.25,1)--(2.25,3);
				\draw (3.5,1)--(3.5,3);
				\draw (4.75,1)--(4.75,3);
				\draw[dotted, line width=.4mm] (5.15,2)--(5.45,2);	
			\end{tikzpicture}
		\end{center}
		\caption{$\mathscr{L}_{\infty}$}
		\label{fig:L1inf}
	\end{subfigure}
	\begin{subfigure}[b]{.3\textwidth}
		\begin{center}
			\begin{tikzpicture}
				[scale=.6,auto=left,every node/.style={circle, fill, inner sep=0 pt, minimum size=.75mm, outer sep=0pt},line width=.8mm,line join=round]
				\draw[dotted, line width=.4mm] (5.45,2)--(5.75,2);
				\draw[line cap=round] (1.25,3)to(1.25,1.5); 
				\draw[line cap=round] (1.5,1)--(2.25,1); \draw[line cap=round] (2.75,1)--(3.5,1); \draw[line cap=round] (4,1)--(4.75,1); 
				\draw[line width=.4mm, line cap=round] (1,1)--(1.5,1)--(1.25,1.5)--(1,1);
				\draw[line width=.4mm] (2.25,1)--(2.75,1)--(2.5,1.5)--(2.25,1);
				\draw[line width=.4mm] (3.5,1)--(4,1)--(3.75,1.5)--(3.5,1);
				\draw[line width=.4mm] (4.75,1)--(5.25,1)--(5,1.5)--(4.75,1);
				\draw[line cap=round,-Butt Cap] (5.25,1) --(6,1);
				\draw[line cap=round, -Butt Cap] (1.25,3)--(6,3);
				\draw [line cap=round](1.25,1.5)--(1.25,3); \draw [line cap=round](2.5,1.5)--(2.5,3);\draw [line cap=round](3.75,1.5)--(3.75,3);\draw[line cap=round] (5,1.5)--(5,3);
			\end{tikzpicture}
		\end{center}
		\caption{$\mathscr{L}^{\Delta}_{\infty}$}
		\label{fig:L2inf}
	\end{subfigure}
	\begin{subfigure}[b]{.3\textwidth}
		\begin{center}
			\begin{tikzpicture}
				[scale=.6,auto=left,every node/.style={circle, fill, inner sep=0 pt, minimum size=.75mm, outer sep=0pt},line width=.8mm, line join=round]
				\draw[dotted, line width=.4mm] (5.45,2)--(5.75,2);
				\draw[line cap=round] (1.5,1)--(2.25,1); \draw[line cap=round] (2.75,1)--(3.5,1); \draw[line cap=round] (4,1)--(4.75,1); 
				\draw[line width=.4mm, line cap=round] (1,1)--(1.5,1)--(1.25,1.5)--(1,1);
				\draw[line width=.4mm] (2.25,1)--(2.75,1)--(2.5,1.5)--(2.25,1);
				\draw[line width=.4mm] (3.5,1)--(4,1)--(3.75,1.5)--(3.5,1);
				\draw[line width=.4mm] (4.75,1)--(5.25,1)--(5,1.5)--(4.75,1);
				\draw[line cap=round,-Butt Cap]  (5.25,1) --(6,1);
				\draw[line cap=round] (1.25,1.5)--(1.25,2.5); \draw [line cap=round](2.5,1.5)--(2.5,2.5);\draw[line cap=round] (3.75,1.5)--(3.75,2.5);\draw[line cap=round] (5,1.5)--(5,2.5);
				\draw [line cap=round](1.5,3)--(2.25,3); \draw[line cap=round] (2.75,3)--(3.5,3); \draw[line cap=round] (4,3)--(4.75,3); 
				\draw[line width=.4mm,line cap=round] (1,3)--(1.5,3)--(1.25,2.5)--(1,3);
				\draw[line width=.4mm] (2.25,3)--(2.75,3)--(2.5,2.5)--(2.25,3);
				\draw[line width=.4mm] (3.5,3)--(4,3)--(3.75,2.5)--(3.5,3);
				\draw[line width=.4mm] (4.75,3)--(5.25,3)--(5,2.5)--(4.75,3);
				\draw[line cap=round,-Butt Cap] (5.25,3)--(6,3);
			\end{tikzpicture}
		\end{center}
		\caption{$\mathscr{L}^{\nabla\Delta}_{\infty}$}
		\label{fig:L3inf}
	\end{subfigure}
	
	\caption{Ladder-like structures}
	\label{fig:ladders}
\end{figure}

Our second main unavoidability result is the following. 

\begin{restatable}{theorem}{thminftwoecon}\label{thm:inf2econ}
Every infinite $2$-edge-connected graph contains one of the following as an induced subgraph: 
$K_{\infty}$, an $\infty$-flower, a one-way infinite chain of finite super-clean pinched ladders, a one-way infinite super-clean pinched ladder, or a member of the family $\Theta_\infty \cup \mathscr{L}_{\infty}\cup \mathscr{L}^{\Delta}_{\infty} \cup \mathscr{L}^{\nabla\Delta}_{\infty}$.
\end{restatable}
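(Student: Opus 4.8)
The plan is to follow the strategy announced in the introduction: reduce \cref{thm:inf2econ} to the infinite $2$-connected result \cref{thm:inf2con} by exploiting the block structure of the given infinite $2$-edge-connected graph $G$. Since $G$ is $2$-edge-connected it has no cutedges, so every block is $2$-connected, and every block is an \emph{induced} subgraph of $G$ (if both endpoints of an edge lie in a block, so does the edge); hence any induced subgraph found inside a single block is automatically induced in $G$. I would split into two cases according to whether $G$ has a block with infinitely many vertices or all of its blocks are finite. In the latter case $G$ must have infinitely many blocks, and I would work with its block-cutvertex tree $T$.

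In the infinite-block case, let $B$ be an infinite block and apply \cref{thm:inf2con} to $B$. If the output is $K_\infty$, a member of $\Theta_\infty$, or a member of $\mathscr{L}_\infty \cup \mathscr{L}^{\Delta}_\infty \cup \mathscr{L}^{\nabla\Delta}_\infty$, we are done, since these already occur in the target list. If the output lies in $\mathscr{F}_\infty$ or $\mathscr{F}^{\Delta}_\infty$, I would instead extract an induced $\infty$-flower: choosing, at rim positions spaced far apart, a short cycle through the apex $u$ (using a single rim edge in the $\mathscr{F}_\infty$ case, or two consecutive rim triangles joined by a rim segment in the $\mathscr{F}^{\Delta}_\infty$ case) produces infinitely many edge-disjoint induced cycles meeting only at $u$, which is exactly an $\infty$-flower. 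The remaining possibility, an infinite slim ladder, is the crux and is treated below.

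In the all-finite-blocks case, first suppose some cutvertex $u$ lies in infinitely many blocks $B_1, B_2, \dots$. Distinct blocks share at most one vertex, so $B_i \cap B_j = \{u\}$ for $i \ne j$ and there are no edges between $B_i - u$ and $B_j - u$; since each block is $2$-connected, taking in each $B_i$ a shortest cycle through $u$ (necessarily chordless, hence induced) again yields an $\infty$-flower. Otherwise every cutvertex lies in finitely many blocks and every block is finite, so $T$ is infinite and locally finite; by \cref{thm:koniginf}, $T$ contains a ray, corresponding to a sequence of finite $2$-connected blocks $L_1, L_2, L_3, \dots$ in which consecutive blocks share a cutvertex $u_{i+1}$ and non-consecutive blocks are vertex-disjoint. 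Applying the finite super-cleaning procedure (the block version underlying \cref{lem:cleaningblocks}) to each $L_i$ gives an induced finite super-clean pinched ladder with initial vertex $u_i$ and final vertex $u_{i+1}$; because distinct blocks meet only at the joining vertices, the union of these ladders is induced in $G$ and is precisely a one-way infinite chain of finite super-clean pinched ladders.

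The main obstacle is converting an infinite slim ladder $S$ into a permitted minimally-$2$-edge-connected structure. By definition $S$ has all crosses trivial, but it may contain non-trivial embedded fans, whereas the targets — an infinite super-clean pinched ladder or a one-way infinite chain of finite super-clean pinched ladders — additionally require all fans to be trivial. Since $S$ is locally finite, every fan is finite, so I would process $S$ outward from $p_1q_1$ and repair each non-trivial fan by deleting interior vertices to leave only a trivial fan; this surgery can introduce cutvertices into the induced subgraph even though $S$ itself has none. A dichotomy on whether infinitely many cutvertices are created then governs the outcome: if infinitely many arise, a pigeonhole/\,König-type argument extracts a one-way infinite chain of finite super-clean pinched ladders, and if only finitely many arise, the tail is a single infinite super-clean pinched ladder. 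The delicate points are verifying that this super-cleaning preserves inducedness and keeps the structure infinite, and confirming that in the non-chain case the two rails can be arranged to emanate from a single common initial vertex $\sigma$, as the definition of an infinite super-clean pinched ladder demands. This fan-cleaning analysis, together with the finite block-cleaning lemma, is where essentially all of the genuinely new work lies.
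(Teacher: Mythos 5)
Your proposal is correct and follows essentially the same route as the paper: the trichotomy of your block analysis is exactly the paper's \cref{lem:infblock} (the paper derives it from \cref{thm:indconninf} applied to the block-cutvertex tree, while you invoke \cref{thm:koniginf} plus local finiteness, which is equivalent since any ray in a tree is induced), your fan-to-flower extraction is \cref{lem:inffan}, your use of \cref{lem:cleaningblocks} on the blocks along the ray matches the paper's final case, and your dichotomy on whether the slim ladder has finitely or infinitely many nontrivial embedded fans is precisely \cref{lem:infcleanladder}, which the paper settles by running the deterministic super-cleaning process of \cref{alg:superclean} (whose correctness, including inducedness and the choice of the initial vertex $\sigma$, is established in \cref{sec:finite}). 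The ``delicate points'' you flag at the end are exactly the work the paper delegates to that already-proved finite machinery, so no new gap remains.
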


In \cref{sec:prelimresults}, we state some results we need for our proofs.  In \cref{sec:finite,sec:infinite}, we prove \cref{thm:2econ,thm:inf2econ}, respectively.
\cref{sec:conclusion} contains some final remarks.
In the \hyperref[sec:otherstructures]{Appendix}, we use \cref{thm:2econ,thm:inf2econ} to describe other unavoidable substructures for $2$-edge-connected graphs.

\section{Preliminary results}\label{sec:prelimresults}

\subsection{Basic Results}\label{sec:basicresults}

In this section we state some results we will use later.
In \cref{sec:prelimresults,sec:finite,sec:infinite}, if we say that \emph{$G$ contains $H$} without further qualification, we mean that $G$ contains $H$ as an induced subgraph.
We begin with two simple results.

\begin{lemma}\label{lem:fanflower}
If the apex vertex of a fan $F$ has degree at least $3r-1$ then $F$ has an $r$-flower as an induced subgraph.
\end{lemma}

\begin{proof}
Suppose the apex of $F$ is $u$, the rim is $Q$, and $w_1, w_2\dots , w_{3r-1}\in V(Q)$ are the endpoints of $3r-1$ edges incident with $u$, in order along $Q$.  The subgraph of $L$ induced by $u$ and the sets $V(Q[w_{3i+1},w_{3i+2}])$ for $0\le i\le r-1$ is an $r$-flower.
\end{proof}

\begin{observation}\label{obs:degdiam}
If a connected graph $G$ has maximum degree at most $d \ge 2$ and diameter at most $k \ge 1$, then the order of $G$ is at most $f_{0}(d,k) = 1 + d + d(d-1) + d(d-1)^2 + \dots + d(d-1)^{k-1}$.  Therefore, for $d \ge 3$ and $k \ge 2$, every connected graph $G$ with order at least $\fref{obs:degdiam}(d,k) = f_{0}(d-1,k-1)+1$ either has maximum degree at least $d$ or has diameter at least $k$.
\end{observation}

\subsection{Unavoidable $2$-connected induced subgraphs}\label{ss:highconn}

In this subsection we describe the known unavoidable induced subgraphs for $2$-connected graphs, which are instrumental in proving our results.  We need a number of additional definitions before stating the results.

In \cite{unavoidableinducedsubgraphs}, the first author, Ding, and Oporowski considered the induced subgraph ordering for $2$-connected graphs. 
They defined two families of graphs that generalize stars and paths to $2$-connected graphs.
The first family $\Theta_r$, where $r \ge 3$ is an integer or $r = \infty$, were defined in \cref{ss:mainresults}.  

The description of the second family requires some preliminary definitions.
A \emph{ladder} is a triple $(L,P,Q)$ that consists of a graph $L$ whose vertices all lie on two disjoint induced paths $P=p_1p_2 \dots p_{\ell}$ and $Q=q_1q_2 \dots q_m$ of $L$, called \emph{rails}, and where $p_1q_1,p_{\ell}q_m\in E(L)$.
Rungs, cross, span, and trivial are defined analoguously to pinched ladders in \cref{ss:mainresults}.

We define a \emph{clean ladder} (\emph{clean pinched ladder}) to be a ladder (pinched ladder) $L$ where all crosses are trivial and $|V(L)| \ge 3$. Clean ladders are $2$-connected. Let $\Lambda_r$ be the set of all clean ladders of order at least $r$, which can be considered as an analog of the path $P_r$ for $2$-connected graphs.
Every cycle of length at least $r$ belongs to $\Lambda_r$.  Note that graphs in $\Lambda_r$ are in general not minimally $2$-edge-connected (in the induced subgraph ordering).

We observe that there is a result similar to \cref{lem:cleaningblocks} for 2-connected graphs, which shows that induced clean ladders are ubiquitous in $2$-connected graphs.  This result does not seem to have been noted before, although clean ladders play a key role in the results on unavoidable large $2$-connected induced subgraphs, described later in this subsection.  We hope this result will be useful for future applications.

\begin{theorem}\label{thm:2conpscl}
Let $G$ be a $2$-connected finite or infinite graph with distinct vertices $u$ and $v$.
Then $G$ contains an induced clean pinched ladder with initial vertex $u$ and final vertex $v$.
This may be regarded as an induced clean ladder in which each of $u$ and $v$ is an end of a rail.
\end{theorem}
\begin{proof}
Since $G$ is $2$-connected there are two internally disjoint $uv$-paths.  Let $P$ and $Q$ be two such paths such that $|V(P)\cup V(Q)|$ is minimum.
If one of $P$ or $Q$ is just the edge $uv$, then minimality implies that $P \cup Q$ is an induced cycle, which is the required subgraph.  So we may assume this is not the case, and then the minimality of $|V(P)\cup V(Q)|$ implies that $P$ and $Q$ are induced paths.
Let $L$ be the subgraph of $G$ induced by $V(P)\cup V(Q)$.
A nontrivial cross in $L$ allows us to find two paths with fewer vertices, contradicting minimality.
Thus,  $L$ is a clean pinched ladder, as desired.  Also, $L$ may be regarded as a clean ladder by choosing two distinct edges of $P \cup Q$, one incident with $u$ and one with $v$, to be the end rungs.
\end{proof}

The 2-connected analog of \cref{thm:2econ} as shown in \cite{unavoidableinducedsubgraphs} is as follows.

\begin{theorem}[{\cite[(1.4)]{unavoidableinducedsubgraphs}}]\label{thm:2con}
For every integer $r\ge 3$, there is an integer $\fref{thm:2con}(r)$ such that every $2$-connected graph of order at least $\fref{thm:2con}(r)$ contains  $K_r$ or a member of the family $\Theta_r\cup\Lambda_r$  as an induced subgraph.
\end{theorem}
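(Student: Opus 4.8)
The plan is to reduce to the already-understood connected case and then promote the resulting star or path to one of the three target structures using $2$-connectivity, with Ramsey-type ``cleaning'' steps to guarantee that the substructure we extract is \emph{induced}. Concretely, fix a large threshold $s=s(r)$ to be determined by the cleaning losses below. Since every $2$-connected graph is connected, \cref{thm:indconnfin} applies: provided $|V(G)|\ge \fref{thm:indconnfin}(s)$, the graph $G$ contains an induced $K_s$, an induced $K_{1,s}$, or an induced $P_s$. In the first case we already have $K_r$ and are done, so the work is to treat a large induced star and a large induced path, choosing $s$ large enough that each survives the cleaning with $r$ vertices to spare.

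\textbf{Star case.} Suppose $G$ contains an induced star with centre $v$ and independent leaves $u_1,\dots,u_s$. Because $G$ is $2$-connected, $G-v$ is connected, so it contains a tree $T$ spanning $\{u_1,\dots,u_s\}$; we may take $T$ minimal, so that its leaves all lie in $\{u_1,\dots,u_s\}$. I would then split on the branching of $T$. If some vertex $w$ separates $T$ into at least $r$ pieces each meeting a leaf, then routing one leaf through each piece and appending $v$ yields $r$ paths from $v$ to $w$ that are internally disjoint; after deleting chords this is a member of $\Theta_r$. Otherwise every vertex of $T$ has bounded branching, so a tree on $s$ leaves must have two leaves $u_i,u_j$ at tree-distance at least $r$; the $T$-path between them is long and both of its ends are adjacent to $v$, so closing it through $v$ produces a long cycle, which cleans to a member of $\Lambda_r$.

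\textbf{Path case.} Suppose instead $G$ contains a long induced path $P$. Using $2$-connectivity (equivalently, an ear running between two points of $P$, or Menger's theorem applied to the ends), I would produce a second disjoint path to serve as the opposite rail, so that $P$ together with this rail and their connecting edges forms a ladder $(L,P,Q)$ --- in general a ``dirty'' one, with chords on the rails and nontrivial crosses among the rungs. The core step is then a cleaning procedure: applying an Erd\H{o}s--Szekeres/Ramsey argument to the linear order of the rung attachments, I would pass to a large sub-ladder in which the rungs are monotone and the rails are chordless, so that every remaining cross is trivial; this is exactly a clean ladder, hence a member of $\Lambda_r$. The configurations that resist this cleaning are dense crossing or chord patterns, and a further Ramsey argument on them returns one of the other two outputs, $K_r$ or a member of $\Theta_r$.

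The main obstacle is precisely the induced requirement, and with it the bookkeeping of the cleaning. Menger's theorem and $2$-connectivity make the \emph{underlying} paths --- the internally disjoint $v$--$w$ paths of a theta, or the two rails of a ladder --- easy to find; what is delicate is suppressing all the unwanted edges (chords on rails, non-rung edges between the paths, nontrivial crosses) without losing more than a bounded fraction of the structure. This forces the nested Ramsey/Erd\H{o}s--Szekeres steps that drive the rapid growth of $\fref{thm:2con}(r)$, and it is also where the three outcomes genuinely interact: an attempt to clean a ladder or a theta can fail only by exposing a dense pattern that yields one of the other two unavoidable structures. Making that trichotomy quantitative, and verifying that a large \emph{clean} ladder (all crosses trivial) survives, is, I expect, the crux of the argument.
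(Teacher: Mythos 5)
This theorem is not proved in the present paper at all: it is imported verbatim from \cite{unavoidableinducedsubgraphs}, where its proof occupies essentially that entire paper. So the relevant benchmark is that reference's argument, and your sketch falls well short of it. Your opening reduction via \cref{thm:indconnfin} to the three cases (induced $K_s$, $K_{1,s}$, $P_s$) is a sensible start, but every promotion step after that has a gap located exactly at the induced-subgraph requirement, which you yourself flag as the crux but never resolve. Most fundamentally, ``after deleting chords this is a member of $\Theta_r$'' is not an available move: an induced subgraph must retain \emph{all} edges of $G$ among the kept vertices, so chords between distinct branches of your putative theta, or edges from $v$ or $w$ into branch interiors, can only be removed by deleting vertices and rerouting, and doing that can collapse the whole structure (or force one of the other two outcomes). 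Handling precisely this interaction is the substance of the cited proof, not a routine post-processing step.

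The same conflation of ``subgraph'' with ``induced subgraph'' breaks your other two branches. In the bounded-branching star subcase, two leaves at tree-distance at least $r$ give a long path in $G-v$ as a \emph{subgraph}, but that path may carry arbitrary chords of $G$; an induced path on its vertex set can shortcut to length $2$, so no long induced cycle (hence no member of $\Lambda_r$) follows --- indeed a long cycle with arbitrary chords is essentially a Hamiltonian instance of the theorem itself, no easier than what you set out to prove. In the path case, selecting a monotone subfamily of rungs by Erd\H{o}s--Szekeres likewise produces a subgraph, not an induced one: the unselected rungs and rail chords between kept vertices persist and can form nontrivial crosses, while deleting their endpoints severs the rails and destroys the ladder. (Note also that clean ladders, as defined here, must tolerate embedded fans --- vertices of one rail with many neighbors on the other --- which your monotonicity step does not address; Menger gives you two paths between the ends of $P$, but nothing forces the second path to attach to $P$ often enough to create a ladder rather than a chorded cycle.) In short, the skeleton is plausible but the proof is missing at exactly the point where the real argument of \cite{unavoidableinducedsubgraphs} does its work.
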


In \cite{unavoidableinfinducedsubgraphs}, the first author, Ding, and Oporowski extended this theorem to infinite $2$-connected graphs.  To do this, one of the families we need is $\Theta_\infty$. 
\cref{fig:theta} shows the structure of graphs in this family; the graphs in \cref{fig:K2inf}
form the class $\mathscr{K}_{2,\infty}$ consisting of all subdivisions of $K_{2,\infty}$.

\begin{figure}[!htb]
	\begin{subfigure}[b]{.47\textwidth}
		\begin{center}
			\begin{tikzpicture}
			[scale=.8,auto=left,every node/.style={circle, fill, inner sep=0 pt, minimum size=.75mm, outer sep=0pt},line width=.8mm, line join=round,line cap=bevel]
			\draw[dotted, line width=.4mm] (3.75,1)--(4.15,1);
			\draw (2.15,3)--(4.75,1)--(3.45,3)--(2.25,1) -- (2.15,3)--(1,1)--(3.45,3)--(3.5,1)--(2.15,3)--(2.25,1); 
			  \node[minimum size=2mm,black] (u1) at (2.15,3) [label=left:$u_1$] {};
            \node[minimum size=2mm,black] (u2) at (3.45,3) [label=right:$u_2$] {};
		\end{tikzpicture}
	\end{center}
	\caption{$\mathscr{K}_{2,\infty}$, graphs with no edge $u_1u_2$}
	\label{fig:K2inf}
	\end{subfigure}
\begin{subfigure}[b]{.47\textwidth}
\begin{center}
\begin{tikzpicture}
	[scale=.8,auto=left,every node/.style={circle, fill, inner sep=0 pt, minimum size=.75mm, outer sep=0pt},line width=.8mm,line join=round,line cap=bevel]
	\draw[dotted, line width=.4mm] (3.75,1)--(4.15,1);
			\draw (2.15,3)--(4.75,1)--(3.45,3)--(2.25,1) -- (2.15,3)--(1,1)--(3.45,3)--(3.5,1)--(2.15,3)--(2.25,1); 
	\draw[line width=.4mm] (2.15,3)--(3.45,3);
			  \node[minimum size=2mm,black] (u1) at (2.15,3) [label=left:$u_1$] {};
            \node[minimum size=2mm,black] (u2) at (3.45,3) [label=right:$u_2$] {};
\end{tikzpicture}
\end{center}
\caption{Graphs with edge $u_1u_2$}
\label{fig:K2infplus}
\end{subfigure}
\caption{Structure of $\Theta_{\infty}$}
\label{fig:theta}
\end{figure}
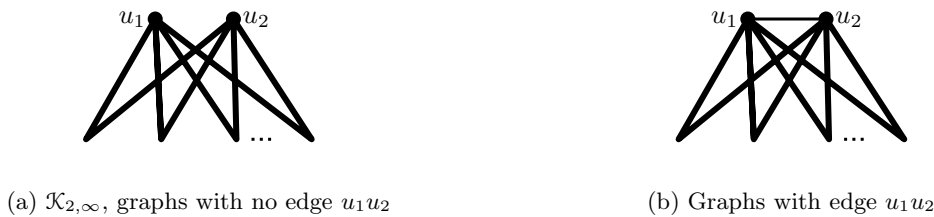

For several graph classes below we use the operation of \emph{replacing a vertex by a triangle}: if $v$ has degree $d \le 3$ and neighbors $u_i$, $1 \le i \le d$, we delete $v$, add a triangle of new vertices $(v_1 v_2 v_3)$, and add edges $u_i v_i$ for $1 \le i \le d$.

We use certain infinite fan-like structures and ladders.  Let $F_\infty$ denote the graph that consists of a vertex $u$, a ray $v_1 v_2 v_3 \ldots$ (called the \emph{rim}), and edges $uv_i$ for all $i \in \{1, 2, 3, \dots\}$.
Let $\mathscr{F}_{\infty}$ be the family of all subdivisions of $F_\infty$; see \cref{fig:famFinf}.
Let $\mathscr{F}_\infty^{\Delta}$ be the family of graphs obtained from $F_{\infty}$ by replacing every rim vertex with a triangle, and then arbitrarily subdividing each edge not in such a triangle; see \cref{fig:Fdeltinf}.
An \emph{infinite slim ladder} is a triple $(L,P,Q)$ where $L$ is a locally finite graph consisting of two disjoint induced rays $P = p_1 p_2 \dots$ and $Q = q_1 q_2 \dots$ called \emph{rails} and infinitely many edges $p_i q_j$, including $p_1 q_1$, called \emph{rungs}, such that all crosses (defined as for finite ladders) are trivial.  In particular, $L_\infty$ is an infinite slim ladder, and another example is shown in \cref{fig:infcl}.  
Infinite slim ladders can be regarded as a one-way infinite version of clean ladders.

\begin{figure}[!htb]
\centering
\begin{subfigure}[b]{.3\textwidth}
		\begin{center}
			\begin{tikzpicture}
				[scale=.9,auto=left,every node/.style={circle, fill, inner sep=0 pt, minimum size=2mm, outer sep=0pt},line width=.8mm, line join=round, line cap=bevel]
				\draw[dotted, line width=.4mm] (4.95,2)--(5.35,2);
				\draw (2.25,1)--(2.65,3)--(1,1); 
                \draw(2.65,3)--(3.5,1);\draw (2.65,3)--(4.75,1);
				\draw(1,1)--(5,1);
                \node (u) at (2.65,3) [label={[label distance=2pt]above:$u$}]{};
                \node (v1) at (1,1) [label=below:$v_1$]{};
                \node (v2) at (2.25,1)[label=below:$v_2$]{};
                 \node (v3) at (3.5,1)[label=below:$v_3$]{};
                  \node (v3) at (4.75,1)[label=below:$v_4$]{};
			\end{tikzpicture}
		\end{center}
		\caption{$\mathscr{F}_{\infty}$}
		\label{fig:famFinf}
	\end{subfigure}
    \kern20pt
	\begin{subfigure}[b]{.3\textwidth}
		\begin{center}
			\begin{tikzpicture}
				[scale=.9,auto=left,every node/.style={circle, fill, inner sep=0 pt, minimum size=2mm, outer sep=0pt},line width=.8mm, line join=round, line cap=bevel]
				\draw[dotted, line width=.4mm] (5.45,2)--(5.85,2);
				\draw[line cap=round] (2.5,1.5)--(2.65,3)to (1.25,1.5); \draw[line cap=round] (3.75,1.5)--(2.65,3)--(2.5,1.5);\draw[line cap=round] (2.65,3)--(5,1.5);
				\draw[line cap=round] (1.5,1)--(2.25,1); \draw [line cap=round](2.75,1)--(3.5,1); \draw[line cap=round] (4,1)--(4.75,1); 
				\draw[line width=.4mm, line cap=round] (1,1)--(1.5,1)-- (1.25,1.5)--(1,1);
				\draw[line width=.4mm] (2.25,1)--(2.75,1)--(2.5,1.5)--(2.25,1);
				\draw[line width=.4mm] (3.5,1)--(4,1)--(3.75,1.5)--(3.5,1);
				\draw[line width=.4mm] (4.75,1)--(5.25,1)--(5,1.5)--(4.75,1);
				\draw[line cap=round, -Butt Cap] (5.25,1)--(5.5,1);
                \node (u) at (2.65,3) [label={[label distance=2pt]above:$u$}]{};
                \node (v1) at (1.25,1.5)[label=left:$v_1$]{};
                \node (w1) at (1,1) [label=below:$w_1$]{};
                \node (x1) at (1.5,1)[label=below:$x_1$]{};
                \node (v2) at (2.5,1.5)[label=left:$v_2$]{};
                \node (w2) at (2.25,1) [label=below:$w_2$]{};
                \node (x2) at (2.75,1)[label=below:$x_2$]{};
                \node (v3) at (3.75,1.5)[label=left:$v_3$]{};
                \node (w3) at (3.5,1) [label=below:$w_3$]{};
                \node (x3) at (4,1)[label=below:$x_3$]{};
                \node (v4) at (5,1.5)[label=left:$v_4$]{};
                \node (w4) at (4.75,1) [label=below:$w_4$]{};
                \node (x4) at (5.25,1)[label=below:$x_4$]{};
			\end{tikzpicture}
            
		\end{center}
		\caption{$\mathscr{F}_{\infty}^{\Delta}$}
		\label{fig:Fdeltinf}
	\end{subfigure}
    \kern20pt
    \begin{subfigure}[b]{.3\textwidth}
		\begin{center}
			\begin{tikzpicture}
				[scale=.9,auto=left,every node/.style={circle, fill, inner sep=0 pt, minimum size=2mm, outer sep=0pt}, line width=.4mm]
				\draw[line width=.8mm,line cap=round](1,1) to (4.05,1); \draw[line cap=round] (4.05,1)--(4.6,1);\draw[line width=.8mm, line cap=round, -Butt Cap] (4.6,1)--(5.6,1); \draw[line width=.8mm, line cap=round](1,3) to (4.05,3);\draw (4.05,3)--(4.6,3);\draw[line width=.8mm, line cap=round, -Butt Cap]  (4.6,3) to (5.6,3);\draw[line cap=round](1,1)--(1,3);
				\draw[line join=bevel](1.75,1)-- (1.75,3) --(1.25,1);\draw(1.75,3)--(2.25,1);
				\draw (2.5,3) to (2.5,1);\draw (3.5,1)--(3.5,3);
				\draw (4.05,3) to (4.6,1);\draw(4.6,3) to (4.05,1);
				\draw[dotted] (4.95,2)--(5.25,2);
                \node (p1) at (1,3)[label=above:$p_1$]{};
                \node (q1) at (1,1)[label=below:$q_1$]{};
			\end{tikzpicture}
		\end{center}
		\caption{An infinite slim ladder}
		\label{fig:infcl}
	\end{subfigure}
\caption{Two types of fan-like structure and an infinite slim ladder}
\label{fig:fans}
\end{figure}

Note that the labeling of vertices in \cref{fig:famFinf,fig:Fdeltinf} will be used in \cref{sec:infinite}.

\begin{theorem}[{\cite[(1.7)]{unavoidableinfinducedsubgraphs}}]\label{thm:inf2con}
Every infinite $2$-connected graph contains $K_{\infty}$, an infinite slim ladder, or a member of $\Theta_\infty\cup\mathscr{F}_\infty\cup\mathscr{F}^{\Delta}_\infty\cup\mathscr{L}_\infty\cup\mathscr{L}^{\Delta}_{\infty}\cup\mathscr{L}^{\nabla\Delta}_{\infty}$ as an induced subgraph.
\end{theorem}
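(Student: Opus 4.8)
The plan is to bootstrap from the connected induced-subgraph trichotomy of \cref{thm:indconninf} and then exploit $2$-connectivity to ``close up'' the resulting star or ray into one of the listed $2$-connected families. Applying \cref{thm:indconninf} to the infinite $2$-connected graph $G$, we either find $K_\infty$ as an induced subgraph and are immediately done, or we obtain an induced copy of $K_{1,\infty}$ or of $P_\infty$. These two surviving cases match the two silhouettes of the unavoidable structures: a star should be completed to a member of $\Theta_\infty$ or to a fan in $\mathscr{F}_\infty \cup \mathscr{F}_\infty^{\Delta}$, while a ray should be completed to a ladder, namely an infinite slim ladder or a member of $\mathscr{L}_\infty \cup \mathscr{L}_\infty^{\Delta} \cup \mathscr{L}_\infty^{\nabla\Delta}$.

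Consider first an induced star with centre $u$ and independent leaf set $L = \{a_1, a_2, \dots\}$. Because $G$ is $2$-connected, $G - u$ is connected, so the leaves are linked to one another by paths avoiding $u$. I would record these links in an auxiliary tree $T$ whose vertices are (a subset of) $L$ and whose edges correspond to internally disjoint $L$--$L$ paths in $G - u$, and then apply \cref{thm:koniginf} to the infinite tree $T$. If $T$ has a vertex of infinite degree, that vertex serves as a second hub $w$, and a Menger/fan-type argument yields infinitely many internally disjoint $w$--$a_i$ paths in $G - u$; pasting on the spokes $u a_i$ produces infinitely many internally disjoint $u$--$w$ paths, that is, a subdivision of $K_{2,\infty}$, which after cleaning lies in $\Theta_\infty$ (the two cases of \cref{fig:theta} being distinguished by whether the edge $u w$ is present). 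If instead $T$ contains a ray, consecutive leaves are linked into a rim and, with $u$ as apex, we obtain a fan; according to the adjacencies that survive at the junctions where a rim vertex meets its spoke, this fan lies in $\mathscr{F}_\infty$ or in $\mathscr{F}_\infty^{\Delta}$.

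Now consider an induced ray $R = v_1 v_2 \dots$, which will play the role of one rail. Using $2$-connectivity I would attach $R$ to the rest of $G$ by an ear-type family of paths, promote these to rungs, and assemble a second rail from their far endpoints. An iterated Ramsey argument then thins the rungs to an infinite subfamily on which the adjacency pattern between distinct rungs, and between each rung and the two rails, is constant; this forces every cross to be trivial and produces either an infinite slim ladder or, according to the adjacencies that survive at the rung--rail junctions on one or on both rails, a member of $\mathscr{L}_\infty$, $\mathscr{L}_\infty^{\Delta}$, or $\mathscr{L}_\infty^{\nabla\Delta}$.

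The main obstacle is keeping every stage induced while it is carried out infinitely often. Each cleaning step invokes \cref{thm:ramseyinf} (or \cref{thm:koniginf}) to kill one type of unwanted chord on an infinite subset, but there are several families of chords to eliminate simultaneously --- chords between two connecting paths, between a connecting path and the star or ray, and among the rungs and rails --- so the steps must be nested into a single diagonal argument that still leaves an infinite and genuinely $2$-connected remnant. The most delicate point is the exact classification appearing at the end of each case: one must show that the triangle families $\mathscr{F}_\infty^{\Delta}$, $\mathscr{L}_\infty^{\Delta}$, $\mathscr{L}_\infty^{\nabla\Delta}$ and the slim ladder are not merely possible but are the only induced outcomes once a rim or rail vertex is forced to host a spoke or rung and continue the rim or rail at the same time, which is precisely the configuration in which a single branch vertex cannot be realized as an induced subdivision and a triangle replacement becomes unavoidable.
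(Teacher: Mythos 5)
You should first be aware that this paper contains no proof of \cref{thm:inf2con} to compare against: the theorem is imported verbatim from \cite[(1.7)]{unavoidableinfinducedsubgraphs}, where its proof occupies essentially that entire paper. Measured against that benchmark, your sketch has a genuine gap, and you have in fact named it yourself: the two steps you flag as ``the main obstacle'' and ``the most delicate point'' --- (i) nesting infinitely many Ramsey/K\"{o}nig cleanings into a single diagonal argument that still leaves an infinite \emph{induced} structure, and (ii) showing that the triangle-replacement families $\mathscr{F}_\infty^{\Delta}$, $\mathscr{L}^{\Delta}_{\infty}$, $\mathscr{L}^{\nabla\Delta}_{\infty}$ and the infinite slim ladder are the \emph{only} induced outcomes when a rim or rail vertex must simultaneously host a spoke or rung --- are not technicalities that can be deferred; they are the content of the theorem. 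As written, your text is a plan with the hard parts left open, not a proof.

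Beyond incompleteness, the branch structure ``star $\Rightarrow \Theta_\infty \cup \mathscr{F}_\infty \cup \mathscr{F}^{\Delta}_\infty$, ray $\Rightarrow$ ladders'' is unsound, because \cref{thm:indconninf} gives you no control over which of $K_{1,\infty}$ or $P_\infty$ you receive. Take $G = F_\infty$ itself: its rim is an induced ray, so your ray branch may be the one invoked, and it promises to assemble a second rail from the far endpoints of attachment paths --- but every attachment path from the rim ends at the single apex, no second rail exists, and $F_\infty$ contains no induced ladder of any listed kind (any two rays in $F_\infty$ eventually share a tail of the rim). The only available outcome is the fan, which your ray branch cannot produce; symmetrically, the star branch can be forced into ladder-type outcomes. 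So each branch must itself contain the degenerate alternative (infinitely many rungs through one vertex, yielding fans or thetas, versus rungs with distinct well-ordered far endpoints, yielding ladders) --- a star--comb style dichotomy threaded through both cases, which your case split omits. There are smaller but real gaps as well: the auxiliary tree $T$ of internally disjoint leaf--leaf paths is asserted rather than constructed (the standard tool here is the star--comb lemma applied in $G-u$ to the leaf set); the ``Menger/fan-type argument'' producing infinitely many internally disjoint $u$--$w$ paths requires an infinite fan version of Menger's theorem, invoked with care; and even granted a $K_{2,\infty}$-subdivision, making it induced so that only the optional edge $u_1u_2$ of \cref{fig:theta} survives is yet another infinite cleaning you have not carried out.
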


As shown in \cite{unavoidableinfinducedsubgraphs}, \cref{thm:2con} on finite graphs can also be proved using \cref{thm:inf2con} on infinite graphs.
We will use \cref{thm:2con,thm:inf2con} to prove our results on $2$-edge-connected graphs, \cref{thm:2econ,thm:inf2econ}.

\section{Proof of the finite theorem}\label{sec:finite}
In this section, we prove \cref{thm:2econ} using \cref{lem:cleaningblocks,thm:2con}.

\begin{lemma}\label{lem:largecutverttree}
For all integers $p,q\ge 3$ there exists an integer $\fref{lem:largecutverttree}(p,q)$ such that every $2$-edge-connected graph of order at least $\fref{lem:largecutverttree}(p,q)$ has a block-cutvertex tree of order at least $p$ or a block of order at least $q$.
\end{lemma}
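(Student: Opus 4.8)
The plan is to prove the contrapositive by a direct counting argument: if the block-cutvertex tree of $G$ has small order and every block of $G$ is small, then $G$ itself has few vertices, so choosing $\fref{lem:largecutverttree}(p,q)$ large enough forces one of the two desired outcomes. Fix integers $p,q\ge 3$ and suppose, toward the contrapositive, that $G$ is a $2$-edge-connected graph whose block-cutvertex tree $T$ has order at most $p-1$ and each of whose blocks has order at most $q-1$. I would bound $|V(G)|$ from above and then set $\fref{lem:largecutverttree}(p,q)$ to exceed that bound.

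The argument has two short steps. First I would count the blocks: by definition the vertex set of $T$ is the disjoint union of the cutvertices of $G$ and the blocks of $G$, so the number of blocks is at most $|V(T)|\le p-1$. Second I would bound $|V(G)|$ in terms of the blocks. Since $G$ is connected, every vertex of $G$ lies in at least one block, so $V(G)$ is the union of the vertex sets of the blocks $B_1,\dots,B_b$ with $b\le p-1$. Hence
\[
|V(G)|\;\le\;\sum_{i=1}^{b}|V(B_i)|\;\le\;b(q-1)\;\le\;(p-1)(q-1).
\]

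It then suffices to take $\fref{lem:largecutverttree}(p,q)=(p-1)(q-1)+1$: a $2$-edge-connected graph of order at least this value cannot simultaneously have a block-cutvertex tree of order at most $p-1$ and all blocks of order at most $q-1$, so it must have a block-cutvertex tree of order at least $p$ or a block of order at least $q$, as required.

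I do not expect any genuine obstacle here; the result is essentially bookkeeping. The only two points that need care are that every vertex of $G$ lies in some block (which justifies the union bound $V(G)=\bigcup_i V(B_i)$) and that the block-cutvertex tree correctly accounts for every block as one of its vertices. The displayed bound is deliberately loose, since cutvertices are double-counted across the blocks containing them, but this is harmless because only an upper bound on $|V(G)|$ is needed.
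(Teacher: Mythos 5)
Your proposal is correct and follows essentially the same route as the paper: a contrapositive counting argument that bounds the number of blocks by the order of the block-cutvertex tree and then bounds $|V(G)|$ by summing block orders. The only difference is that the paper uses the exact identity $|V(G)| = 1 + \sum_{i}(|V(B_i)|-1)$, which avoids double-counting cutvertices and yields the slightly smaller constant $2+(p-1)(q-2)$ in place of your $(p-1)(q-1)+1$; since only finiteness of the bound matters, your looser union bound is equally valid.
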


\begin{proof}
Suppose the graph $G$ has blocks $B_1, B_2, \dots, B_k$.  Then $|V(G)| = 1 + \sum_{i=1}^k (|V(B_i)|-1)$, so if $k \le p-1$ and $|V(B_i)| \le q-1$ for each $i$ we have at most $1+(p-1)(q-2)$ vertices.  Thus, $|V(G)| \ge \fref{lem:largecutverttree}(p,q)=2+(p-1)(q-2)$ guarantees that either $k \ge p$ or $|V(B_i)| \ge q$ for some $i$.
\end{proof}

\cref{lem:largecutverttree} implies that we can consider two cases: either $G$ has a large block or $G$ has a large block-cutvertex tree.  In the former case we apply \cref{thm:2con,lem:super-cleaning}, and we address the latter in \cref{lem:pathtobc,lem:cleaningblocks,lem:bctopsclc}.

We can now prove that if $(L,P,Q)$ is a large clean ladder, then $L$ contains either an $r$-flower, a large super-clean pinched ladder, or a large chain of super-clean pinched ladders as an induced subgraph. 
\begin{lemma}\label{lem:super-cleaning}
Let $r\ge 3$ be an integer.  If $(L,P,Q)$ is a clean ladder of order at least $\fref{lem:super-cleaning}(r)$, then $L$ contains a super-clean pinched ladder of order at least $r$, a chain of $r$ super-clean pinched ladders, or an $r$-flower as an induced subgraph.
\end{lemma}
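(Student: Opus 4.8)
The plan is to run the super-cleaning process (\cref{alg:superclean}) on $(L,P,Q)$ to obtain the induced subgraph $G$ of $L$ which, as shown in the discussion preceding \cref{lem:super-cleaning}, is a chain of super-clean pinched ladders with blocks $L_1, \dots, L_b$. Everything is then controlled by two quantities: the number $b$ of blocks, and the number of vertices of $L$ discarded by the process. I would fix a threshold $N = N(r)$ of order $r^2$, and the trichotomy in the statement will come from whether some fan destroyed by the process has a rim of order at least $N$, whether $b$ is large, or whether $b$ is small and few vertices are discarded.

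The key tool is an extraction statement for a single large fan: if $F$ is an embedded fan of $L$ with apex $a$ and a rim of order at least $N$, then $F$ --- which is an induced subgraph of $L$, and hence so is every induced subgraph of $F$ --- contains either an induced cycle of order at least $r$ or an $r$-flower. To see this, list the neighbors of $a$ along the rim in order as $a_1, a_2, \dots, a_t$ and examine the gaps between consecutive ones. If some gap spans at least $r$ rim vertices, then $a$ together with that gap is an induced cycle of order at least $r$; since a cycle is vacuously a super-clean pinched ladder, this yields the first option. Otherwise every gap is short, which forces $t$ to be of order $N/r$, and I would then select the ``petals'' (the cycle through $a$ and one gap) at every third neighbor, using the gaps $a_1a_2, a_4a_5, a_7a_8, \dots$. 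The spacing guarantees that any two selected petals meet only in $a$ and that no rim edge joins distinct petals, so their union is an induced $r$-flower. Arranging this selection to be genuinely induced --- excluding both chords from $a$ into the interior of a petal and stray rim edges between petals --- is the \emph{main obstacle}, and it is precisely why the petals are spaced out rather than taken consecutively.

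With the extraction statement available, the bookkeeping is routine. Every vertex deleted by \cref{alg:superclean} lies in the rim of a fan destroyed either at a fan-destruction step (each producing one joining vertex, so there are $b-1$ of these) or at the single terminating step (whose deleted tail is contained in the rim of a fan whose apex is the final vertex), plus at most one vertex discarded during initialization; so at most $b$ fans are destroyed in all. If one of them has a rim of order at least $N$, the extraction statement completes the proof. Otherwise every destroyed fan has rim of order less than $N$. If $b \ge r$, then any $r$ consecutive blocks of $G$ induce a chain of $r$ super-clean pinched ladders. If instead $b \le r-1$, then fewer than $(r-1)N+1$ vertices were deleted, so $|V(G)| \ge |V(L)| - rN$; since $G$ has at most $r-1$ blocks and $|V(G)| = 1 + \sum_{i=1}^b (|V(L_i)|-1)$, the pigeonhole principle produces a block $L_i$ with at least $r$ vertices, i.e.\ a super-clean pinched ladder of order at least $r$. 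Tracking the constants, $\fref{lem:super-cleaning}(r) = rN + r^2$ with $N$ of order $r^2$ (for instance $N = 3r^2$) suffices, giving a bound of order $r^3$.
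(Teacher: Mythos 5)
Your proposal is correct and takes essentially the same approach as the paper: run \cref{alg:superclean}, use the identical fan dichotomy (a long gap between consecutive neighbors of the apex yields a long induced cycle, and otherwise petals taken at every third gap yield an induced $r$-flower), and then compare the number of blocks of the resulting chain against the number of deleted vertices. The only difference is bookkeeping: the paper bounds the rim of \emph{every} fan by $R=(3r-3)(r-3)+1$ and argues multiplicatively that a $2/R$ fraction of vertices is kept, whereas you charge deletions additively to the at most $b$ fans actually destroyed, which yields a marginally sharper bound of order $r^3$ instead of the paper's order $r^4$, but the underlying ideas coincide.
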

\begin{proof}
If $r=3$, then $\fref{lem:super-cleaning}(3) = 3$ satisfies the conditions, so we may assume that $r \ge 4$.
A chain of at most $r-1$ pinched super-clean ladders each of order at most $r-1$ has order at most $(r-1)(r-2)+1$.  Thus, such a chain with order at least $(r-1)(r-2)+2$ either has at least $r$ pinched ladders or contains a pinched ladder of order at least $r$.
Also, if a chain of pinched super-clean ladders has a vertex of degree $d+2$ for some $d \ge 3$, then it contains an embedded fan with an apex vertex of degree $d$.

So suppose that $L$ has order at least $\fref{lem:super-cleaning}(r)=\fref{obs:degdiam}(3r+1,(r-1)(r-2)+1)$.  Then there are two possibilities.  First, $L$ has a vertex of degree at least $3r+1$, in which case it contains a fan with an apex vertex of degree at least $3r-1$, and hence an $r$-flower by \cref{lem:fanflower}.  Otherwise, $L$ has diameter at least $(r-1)(r-2)+1$; take two vertices $u$ and $v$ at this distance in $L$.  Then by \cref{lem:cleaningblocks} $L$ contains a chain of pinched super-clean ladders $L'$ with initial vertex $u$ and final vertex $v$, with order at least $(r-1)(r-2)+2$ since $L'$ has a path from $u$ to $v$.  Therefore, $L'$, and hence $L$, contains either at least $r$ pinched ladders or a pinched ladder of order at least $r$.
 \end{proof}

A long path in a block-cutvertex tree means that the graph has a long chain of blocks.

\begin{lemma}\label{lem:pathtobc}
Let $t$ be a positive integer. If the block-cutvertex tree $T$ of a graph $G$ has a path of order $\fref{lem:pathtobc}(t)$, then $G$ contains an induced chain of at least $t$ blocks.
\end{lemma}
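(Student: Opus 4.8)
The plan is to read off the blocks that lie on the given path of $T$ and to show that their union is \emph{already} an induced subgraph of $G$ forming a chain of blocks. Write the path as $\pi$. Since the block-cutvertex tree $T$ is bipartite, with the two sides being the block-vertices and the cutvertex-vertices of $G$, the vertices of $\pi$ alternate between these two types. Hence a path of order $\fref{lem:pathtobc}(t)=2t+1$ contains at least $t$ block-vertices; I would let $B_1, c_1, B_2, c_2, \dots, c_{n-1}, B_n$ (with $n\ge t$) be the block- and cutvertex-vertices appearing consecutively along the subpath of $\pi$ between its first and last block-vertices, so that each $c_i$ is the cutvertex of $G$ joining $B_i$ and $B_{i+1}$.

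Next I would pin down how $B_1,\dots,B_n$ sit inside $G$, using only that $T$ is a tree. Two distinct blocks of $G$ meet in at most one vertex, necessarily a cutvertex, and they meet if and only if that cutvertex is adjacent to both in $T$, i.e.\ if and only if the two blocks are at distance $2$ in $T$. Along $\pi$ the tree-distance between $B_i$ and $B_j$ is $2|i-j|$, so $B_i$ and $B_j$ share a vertex exactly when $|i-j|=1$, the shared vertex then being $c_i$; non-consecutive blocks on the path are therefore vertex-disjoint.

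The main step, and the one I expect to be the real obstacle, is to verify that $H := G[V(B_1)\cup\dots\cup V(B_n)]$ contains no edges beyond those already present in $B_1\cup\dots\cup B_n$ — in other words, that passing to the induced subgraph introduces no ``chord'' joining two path-blocks through some block lying off $\pi$. Suppose $uv\in E(H)$ with $u\in V(B_i)$, $v\in V(B_j)$, and $uv\notin E(B_k)$ for every path-block $B_k$. The edge $uv$ lies in a unique block $B^*$ of $G$, so $u,v\in V(B^*)$. If $B^*=B_i$ then $v\in V(B_i)$ and $uv\in E(B_i)$, a contradiction; hence $B^*\neq B_i$ and $u$ is a cutvertex joining $B_i$ and $B^*$, and symmetrically $B^*\neq B_j$ and $v$ joins $B_j$ and $B^*$. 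Then $B_i - u - B^* - v - B_j$ is a genuine path of length $4$ in $T$, and by uniqueness of paths in a tree it must coincide with the $B_i$--$B_j$ subpath of $\pi$, which has length $2|i-j|$. Thus $|i-j|=2$ and $B^*$ is the path-block between $B_i$ and $B_j$, contradicting $uv\notin E(B^*)$. Consequently every edge of $H$ lies in some $B_k$, so $H = B_1\cup\dots\cup B_n$.

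Finally I would assemble the conclusion: since consecutive blocks share the single cutvertex $c_i$, non-consecutive blocks are disjoint, and each $B_i$ is a block of $G$ (hence has no cutvertex of its own), the blocks of $H$ are exactly $B_1,\dots,B_n$ and its block-cutvertex tree is the path $B_1 c_1 B_2 \dots c_{n-1} B_n$. Therefore $H$ is an induced chain of $n\ge t$ blocks, which is what we want; a clean choice is $\fref{lem:pathtobc}(t)=2t+1$.
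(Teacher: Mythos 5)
Your proof is correct and takes essentially the same route as the paper's: read off the alternating block-vertices and cutvertex-vertices along the path in $T$ and take the union of the corresponding blocks of $G$ (the paper sets $\fref{lem:pathtobc}(t)=2t$, which also suffices, versus your $2t+1$). The only difference is one of detail: you explicitly verify, via uniqueness of paths in the tree $T$, that this union is an induced subgraph whose block-cutvertex tree is a path (no chord can arise through an off-path block), a point the paper's much terser proof leaves implicit.
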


\begin{proof}
Let $\fref{lem:pathtobc}(t)=2t$.
Let $P$ be a path of order $\fref{lem:pathtobc}(t)$ in $T$.  Then alternate vertices of $P$ represent blocks of $G$. So there are at least $t$ vertices of $P$ representing blocks of $G$.  A minimal subpath of $P$ containing $t$ vertices representing blocks corresponds to a induced chain of $t$ blocks in $G$.
\end{proof}

\begin{lemma}\label{lem:bctopsclc}
Let $t$ be a positive integer.  If a $2$-edge-connected graph $G$ contains an induced subgraph that is a chain of $t$ blocks, then $G$ contains a chain of at least $t$ super-clean pinched ladders as an induced subgraph.
\end{lemma}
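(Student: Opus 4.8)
The plan is to treat the chain one block at a time: clean each block into a chain of super-clean pinched ladders using \cref{lem:cleaningblocks}, and then glue these chains together at the joining vertices.

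First I would set up notation. Write the chain of $t$ blocks as $B_1, B_2, \dots, B_t$ with joining vertices $w_2, \dots, w_t$, so that $B_i \cap B_{i+1} = \{w_{i+1}\}$ for each $i$. Since these are blocks of the $2$-edge-connected graph $G$, every $B_i$ is $2$-connected, so $|V(B_i)| \ge 3$; I would choose $w_1 \in V(B_1) \setminus \{w_2\}$ and $w_{t+1} \in V(B_t) \setminus \{w_t\}$ to serve as the overall initial and final vertices. For each $i$, the vertices $w_i$ and $w_{i+1}$ are then distinct and both lie in $B_i$.

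Next, for each $i$ I would apply \cref{lem:cleaningblocks} to the $2$-edge-connected graph $B_i$ with the distinct vertices $w_i$ and $w_{i+1}$. This yields an induced subgraph $H_i$ of $B_i$ that is a chain of super-clean pinched ladders with initial vertex $w_i$ and final vertex $w_{i+1}$; in particular $H_i$ contains at least one super-clean pinched ladder. I would then take $H' = \bigcup_{i=1}^{t} H_i$ as the candidate subgraph, so that $H'$ contains at least $t$ pinched ladders in total.

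It remains to verify that $H'$ is an induced subgraph of $G$ that is genuinely a chain of super-clean pinched ladders, and this is the step that needs the most care. For the induced claim I would use that every edge of $G$ lies inside a single block: each $H_i$ is induced in $B_i$, hence in $G$, and for $x \in V(H_i)$ and $y \in V(H_j)$ with $i \ne j$ and $x \ne y$ no block of $G$ contains both, so $xy \notin E(G)$. For the chain structure I would observe that consecutive pieces $H_i$ and $H_{i+1}$ meet exactly in $w_{i+1}$—which is the final vertex of $H_i$ and the initial vertex of $H_{i+1}$—while nonconsecutive pieces are disjoint. Hence each $w_{i+1}$ is a cutvertex of $H'$, the last pinched ladder of $H_i$ and the first pinched ladder of $H_{i+1}$ remain distinct blocks sharing only $w_{i+1}$, and the block-cutvertex tree of $H'$ is the concatenation of the paths arising from the individual $H_i$, which is again a path. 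Thus $H'$ is a chain of at least $t$ super-clean pinched ladders. I expect the only real obstacle to be this last bookkeeping—confirming that gluing the per-block chains at the joining vertices creates no new edges and merges no blocks—but it follows routinely once one invokes that the edges of $G$ are confined to its blocks.
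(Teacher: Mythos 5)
Your proof is correct, and it takes a related but genuinely different route from the paper's. Both arguments hinge on \cref{lem:cleaningblocks}, but the paper applies it \emph{once}, globally: it picks a non-cutvertex $u$ in the first block and $v$ in the last block of the chain $G'$, invokes \cref{lem:cleaningblocks} on all of $G'$ to get a single induced chain $H$ of super-clean pinched ladders from $u$ to $v$, and then recovers the count $t$ by a short observation --- each block of $H$ lies inside a block of $G'$, while $H$ contains an edge of every block of $G'$ because it contains a $uv$-path --- so $H$ has at least $t$ blocks. That one-shot application makes inducedness automatic and eliminates all gluing bookkeeping, at the price of the (mild) counting argument. You instead apply \cref{lem:cleaningblocks} once per block $B_i$ with the joining vertices $w_i, w_{i+1}$ as prescribed endpoints and then glue; this buys explicit control (each block contributes an identifiable subchain, with the joining vertices of $G'$ surviving as joining vertices of $H'$) but transfers the work into the verification that the union is induced and that no blocks merge, which you carry out correctly. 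Two small points of precision: the blocks $B_1, \dots, B_t$ are blocks of the induced chain $G'$, not of $G$, so your inducedness step should read ``$xy \in E(G)$ implies $xy \in E(G')$ since $G'$ is induced, and every edge of $G'$ lies in a single block \emph{of $G'$}'' (a block of $G$ may well contain vertices from several $B_i$); and your claim that each $B_i$ is $2$-connected needs the same implicit assumption the paper makes --- that the chain's blocks are not cutedges --- which holds in the paper's application because the chain produced by \cref{lem:pathtobc} is assembled from blocks of the $2$-edge-connected graph $G$, but which \cref{lem:cleaningblocks} genuinely requires (it is stated only for $2$-edge-connected graphs), in your version blockwise and in the paper's version for $G'$ as a whole.
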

\begin{proof}
Let $G'$ be an induced chain of $t$ blocks in $G$.  Choose a vertex $u$ in the first block of $G'$ that is not a cutvertex of $G'$, and a similar vertex $v$ in the last block of $G'$.  By \cref{lem:cleaningblocks}, $G'$, and hence $G$, has an induced subgraph $H$ that is a chain of super-clean pinched ladders with initial vertex $u$ and final vertex $v$.  Each block of $H$ lies inside a block of $G'$, 
and $H$ has an edge of every block of $G'$ since $H$ has a $uv$-path.  Hence $H$ has at least $t$ blocks.
\end{proof}

We can now prove the finite theorem.
\thmtwoecon*

\begin{proof}
Let $n_1=\fref{lem:super-cleaning}(r)$, $n_2=\max\{\fref{lem:pathtobc}(r),\fref{thm:2con}(n_1)\}$, $n_3=\fref{thm:indconnfin}(n_2)$, and $\fref{thm:2econ}(r)=\fref{lem:largecutverttree}(n_3,n_2)$.
Then $\fref{thm:2econ}(r)\ge n_3\ge n_2\ge n_1\ge r$.
Note that $n_2$ contributes to $\fref{thm:2econ}(r)$ in two ways because we can get a large block in two ways: as a large block initially, or as a large block coming from a block vertex of high degree in the block-cutvertex tree.
Let $G$ be a $2$-edge-connected graph of order at least $\fref{thm:2econ}(r)$, and let $T$ be the block-cutvertex tree of $G$.

\cref{lem:largecutverttree} implies that either $G$ has a block of order at least $n_2$ or $T$ has order at least $n_3$.
If a block has order at least $n_2$, then \cref{thm:2con} implies that $G$ contains an induced member of $\{K_{n_1}\} \cup \Theta_{n_1} \cup \Lambda_{n_1}$.  Then $G$ contains as an induced subgraph either a member of $\{K_r\} \cup \Theta_r$ and the conclusion holds, or a member of $\Lambda_{n_1}$, in which case \cref{lem:super-cleaning} implies that $G$ contains a super-clean pinched ladder of order at least $r$, a chain of $r$ super-clean pinched ladders, or an $r$-flower, as desired. 

We may therefore assume that $G$ does not have a block of order at least $n_2$, so $T$ has order at least $n_3$.
\cref{thm:indconnfin} implies that $T$ contains as an induced subgraph either $K_{1,n_2}$ or a path of order at least $n_2$.
Suppose that $T$ contains an induced $K_{1,n_2}$.  
Since $G$ does not have a block of order at least $n_2$, it follows that the vertex, say $v$, of high degree in $T$ is a cutvertex of $G$.
So $v$ is in at least $n_2$ blocks.
Since $G$ is $2$-edge-connected, it follows that each block contains an induced cycle that includes $v$.
Thus $G$ contains an $n_2$-flower, and thus an $r$-flower, and the conclusion follows.

We may therefore assume that $T$ has a path of order at least $n_2\ge \fref{lem:pathtobc}(r)$. 
\cref{lem:pathtobc} implies that $G$ contains an induced chain of $r$ blocks.
Then \cref{lem:bctopsclc} implies that $G$ contains an induced chain of at least $r$ super-clean pinched ladders, and the conclusion follows.
\end{proof}

\section{Proof of the infinite theorem}\label{sec:infinite}
In this section we prove \cref{thm:inf2econ} using \cref{lem:cleaningblocks,thm:inf2con}.

\begin{lemma}\label{lem:infblock}
Let $G$ be an infinite $2$-edge-connected graph.  Then $G$ has an induced infinite block, a vertex that is in infinitely many blocks,  or an induced one-way infinite chain of finite blocks.
\end{lemma}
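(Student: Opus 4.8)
The plan is to read off the three outcomes directly from the block structure of $G$, organized via the block-cutvertex tree $T$. Since $G$ is $2$-edge-connected it has no cutedges, so every block of $G$ is $2$-connected; moreover every block is an induced subgraph of $G$, because adding to a block an edge between two of its vertices cannot create a cutvertex, so maximality forces that edge to already lie in the block. I would also note at the outset that $T$ is a genuine tree, since $G$ is connected.

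First I would dispose of the easy case. If some block $B$ of $G$ is infinite, then $B$ is an induced infinite block and the first outcome holds. So I may assume every block of $G$ is finite. Because $G$ is infinite and $|V(G)| = 1 + \sum_i (|V(B_i)| - 1)$ when there are finitely many blocks, the graph $G$ must then have infinitely many blocks, and hence $T$ is an infinite tree.

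Next I would examine degrees in $T$. The degree of a block-vertex of $T$ equals the number of cutvertices lying in that block, which is at most $|V(B)|$ and hence finite, since every block is now finite. The degree of a cutvertex-vertex of $T$ equals the number of blocks containing that cutvertex. Thus if some cutvertex lies in infinitely many blocks, the second outcome holds; otherwise every cutvertex-vertex of $T$ also has finite degree, so $T$ is an infinite, connected, locally finite tree, and by K\"{o}nig's infinity lemma (\cref{thm:koniginf}) $T$ contains a ray.

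The final step, which is where the argument needs the most care, is to translate this ray back into an induced subgraph of $G$. The ray alternates between block-vertices $B_1, B_2, \dots$ and cutvertex-vertices $c_1, c_2, \dots$, with $c_i \in V(B_i) \cap V(B_{i+1})$, and I would set $H = G\bigl[\bigcup_i V(B_i)\bigr]$. The main obstacle is verifying that $H$ really is a \emph{one-way infinite chain of finite blocks}, and this is exactly where $2$-edge-connectivity is used: any edge of $G$ with both ends in $\bigcup_i V(B_i)$ must lie inside a single block of $G$, since an edge joining two distinct blocks would be a cutedge, which $G$ has none of. Hence $H$ is the union of the $B_i$ glued along the $c_i$ with no extra chords; consecutive blocks meet only in $c_i$, so each $c_i$ is a cutvertex of $H$ and no other vertex is, and each $2$-connected $B_i$ is a maximal such subgraph, hence a block of $H$. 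Therefore the block-cutvertex tree of $H$ is the ray $B_1 c_1 B_2 c_2 \dots$, so $H$ is an induced one-way infinite chain of finite blocks, giving the third outcome and completing the proof.
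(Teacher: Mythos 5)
Your proof is correct and takes essentially the same approach as the paper's: rule out an infinite block, pass to the infinite block-cutvertex tree $T$, and split according to whether $T$ has a vertex of infinite degree (a cutvertex in infinitely many blocks, since finite blocks give block-vertices of finite degree) or a ray, which translates back to an induced chain of blocks. The only cosmetic difference is that you invoke K\"{o}nig's theorem (\cref{thm:koniginf}) after reducing to the locally finite case by hand, whereas the paper applies \cref{thm:indconninf} directly to the bipartite tree $T$; your careful check that the ray yields a genuinely induced chain of blocks is detail the paper asserts without proof.
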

\begin{proof}
If $G$ contains an infinite block, then the conclusion follows.
We may therefore assume that $G$ does not contain an infinite block.  
Since $G$ is infinite, it follows that $G$ contains infinitely many finite blocks. 
Thus, the block-cutvertex tree $T$ of $G$ is infinite, connected, and every vertex of infinite degree corresponds to a cutvertex of $G$.
Since $T$ is bipartite, connected, and infinite, \cref{thm:indconninf} implies that $T$ contains as an induced subgraph either $K_{1,\infty}$ or an induced ray.

If $T$ contains an induced $K_{1,\infty}$, then $G$ has a cutvertex that is in infinitely many blocks, and the conclusion follows.
We may therefore assume that $T$ contains an induced ray.
This ray corresponds to an induced one-way infinite chain of finite blocks in $G$, and the conclusion follows.\end{proof}

Since members of $\mathscr{F}_{\infty}$ and $\mathscr{F}^{\Delta}_{\infty}$ are not minimally $2$-edge-connected, we show that each member contains an induced $\infty$-flower.

\begin{lemma}\label{lem:inffan}
Each member of one of the families $\mathscr{F}_{\infty}$ or $\mathscr{F}_\infty^\Delta$ contains an induced $\infty$-flower.
\end{lemma}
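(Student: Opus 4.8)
The plan is to construct the desired $\infty$-flower explicitly in each of the two families by building petals out of pairs of spokes together with the rim connection between them, and by spacing these pairs out so that no chord runs between distinct petals. Throughout I would write $u$ for the central vertex, $S_i$ for the (possibly trivially subdivided) spoke descending from $u$ toward the $i$-th rim position, and $R_i$ for the (possibly trivially subdivided) rim path joining the $i$-th and $(i{+}1)$-th rim positions, following the labeling of \cref{fig:famFinf,fig:Fdeltinf}.

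For a member $H$ of $\mathscr{F}_\infty$, every branch vertex $v_i$ has degree $3$ (degree $2$ for $v_1$) and every subdivision vertex has degree $2$. For each $k\ge 1$ I would let the petal $P_k$ be the cycle through $u$ that runs down $S_{3k-2}$ to $v_{3k-2}$, along $R_{3k-2}$ to $v_{3k-1}$, and back up $S_{3k-1}$ to $u$. Selecting the spokes in the pattern ``use, use, skip'' (indices $3k-2,3k-1$, omitting $3k$) makes the chosen spokes and rim paths pairwise edge-disjoint. Each $P_k$ is an induced cycle, since its only vertices off the two spokes are $v_{3k-2},v_{3k-1}$, whose remaining $H$-neighbours lie on the omitted paths $R_{3k-3}$ and $R_{3k-1}$. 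The rightmost vertex $v_{3k-1}$ of $P_k$ and the leftmost vertex $v_{3k+1}$ of $P_{k+1}$ are separated on the rim by the omitted vertex $v_{3k}$, so even when $R_{3k-1}$ and $R_{3k}$ are single edges there is no edge between a vertex of $P_k$ and a vertex of $P_{k+1}$; hence $\bigcup_k P_k$ induces an $\infty$-flower centred at $u$.

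For a member $H$ of $\mathscr{F}_\infty^{\Delta}$, each rim position $i$ carries a triangle $T_i$ on $\{v_i,w_i,x_i\}$, with $v_i$ attached to $S_i$ and $x_i$ joined to $w_{i+1}$ by $R_i$. Here I would take $P_k$ to run down $S_{3k-2}$ to $v_{3k-2}$, across the triangle edge $v_{3k-2}x_{3k-2}$, along $R_{3k-2}$ to $w_{3k-1}$, across the triangle edge $w_{3k-1}v_{3k-1}$, and back up $S_{3k-1}$; thus $P_k$ uses two vertices from each of the triangles $T_{3k-2}$ and $T_{3k-1}$ and omits the third vertex of each. The same ``use, use, skip'' spacing, now skipping the whole triangle $T_{3k}$ and the rim paths $R_{3k-1},R_{3k}$, yields edge-disjointness, and this omitted triangle together with the omitted rim paths separates $P_k$ from $P_{k+1}$. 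Within a single petal the third triangle vertices $w_{3k-2}$ and $x_{3k-1}$ are omitted, so the triangle edges incident to them are absent from the flower and each $P_k$ is again an induced cycle; hence $\bigcup_k P_k$ induces an $\infty$-flower.

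The step I expect to be the crux is verifying the absence of inter-petal chords. Because the members of these families are \emph{arbitrary} subdivisions, the rim and spoke edges need not be subdivided, so one cannot rely on interposed subdivision vertices to keep petals apart; the separation must instead come from genuinely omitting an entire spoke (and, in the triangle case, an entire triangle) between consecutive petals. Once the ``use, use, skip'' pattern is fixed, all chords are ruled out: the centre $u$ meets only used or omitted spokes, and every other retained vertex has each of its at most three $H$-neighbours either inside its own petal or on an omitted spoke, triangle, or rim path.
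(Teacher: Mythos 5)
Your construction is correct and is essentially the paper's proof: for $\mathscr{F}_\infty$ your petals $Q_{3k-2}\cup R[v_{3k-2},v_{3k-1}]\cup Q_{3k-1}$ coincide exactly with the paper's choice of the sets $V(Q_{3i+1}\cup R[v_{3i+1},v_{3i+2}]\cup Q_{3i+2})$, and your inducedness check (whole spokes/rim segments omitted between consecutive petals, so no chords even when nothing is subdivided) is the same verification the paper leaves implicit. The only difference is in the $\mathscr{F}_\infty^\Delta$ case, where the paper pairs \emph{consecutive} spokes $Q_{2i+1},Q_{2i+2}$ with no skipping, since the omitted third triangle vertices $x_{2i+2}$ and $w_{2i+3}$ already separate consecutive petals, whereas you skip every third triangle --- a harmless, slightly more conservative spacing that changes nothing essential.
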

\begin{proof}
Suppose $G$ is a member of $\mathscr{F}_{\infty}$, labeled as in \cref{fig:famFinf}.
Let $R$ be the rim of $G$ and for each positive integer let $Q_i$ be the $uv_i$-path all of whose interior vertices have degree $2$.
The subgraph of $G$ induced by $\bigcup_{i=0}^{\infty} V(Q_{3i+1} \cup R[v_{3i+1},v_{3i+2}]\cup Q_{3i+2})$ is an $\infty$-flower.

We may therefore assume that $G$ is a member of $\mathscr{F}_\infty^\Delta$, labeled as in \cref{fig:Fdeltinf}.
For each positive integer let $Q_i$ be the $uv_i$-path all of whose interior vertices have degree $2$, and let $R$ be the path $G - \bigcup_{i=1}^\infty V(Q_i)$.
The subgraph of $G$ induced by $\bigcup_{i=0}^\infty V(Q_{2i+1} \cup R[x_{2i+1}, w_{2i+2}] \cup Q_{2i+2})$ is an $\infty$-flower.
\end{proof}

\begin{lemma}\label{lem:infcleanladder}
An infinite slim ladder contains either a one-way infinite chain of finite super-clean pinched ladders or a one-way infinite super-clean pinched ladder as an induced subgraph.
\end{lemma}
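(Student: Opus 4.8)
The plan is to run the super-cleaning process of \cref{alg:superclean} on the infinite slim ladder $(L,P,Q)$ and then read off which of the two outcomes occurs. Since $P = p_1 p_2 \dots$ and $Q = q_1 q_2 \dots$ are rays, the termination conditions of the algorithm (namely $i=\ell$ in \ref{1.3}, $j=m$ in \ref{1.4}, the analogues in \ref{2.2} and \ref{2.3}, and the empty-open-rung condition of \cref{case3}) never trigger, so the process executes infinitely many iterations, each falling under \cref{case1} or \cref{case2}; the process is deterministic, so no choice is invoked. I would first check that every vertex is eventually assigned status \status{keep} or \status{delete}: because $L$ is locally finite there are only finitely many rungs of any given value $h(p_iq_j)=i+j$, so the value of the processed rung increases without bound, and an argument using local finiteness shows that the rail indices $i$ and $j$ of the processed rungs are themselves unbounded. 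Whenever a rung $p_iq_j$ is processed, \ref{1.1} (or \ref{2.1}) assigns \status{keep} to every still-\status{unknown} $p_a$ and $q_b$ with $a \le i$ or $b \le j$, so each $p_n$ and $q_n$ receives a status once a processed rung reaches index at least $n$. The \status{keep} vertices therefore induce an infinite subgraph $H$. The analysis immediately following \cref{alg:superclean} is local to individual rungs and applies verbatim here, so $H$ has no nontrivial crosses and no nontrivial embedded fans; hence $H$ is a chain of super-clean pinched ladders, each of whose blocks is a super-clean pinched ladder.

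It then remains to split into two cases according to the number of joining vertices produced, that is, the number of nontrivial fans destroyed in \ref{1.5a}, \ref{1.6a}, \ref{2.4}, or \ref{2.5}. Suppose first that infinitely many joining vertices $u_2, u_3, u_4, \dots$ are produced. Each $u_i$ is some $p_a$ or $q_b$ of finite index, and each block $L_i$ of $H$ lies on the rail segments strictly between $u_i$ and $u_{i+1}$; since these segments are finite, every $L_i$ is a finite super-clean pinched ladder, and $H$ is exactly a one-way infinite chain of finite super-clean pinched ladders, giving the first outcome. Suppose instead that only finitely many joining vertices are produced. After the last one is created, every subsequently processed rung lies in a trivial fan or a trivial cross and has both ends kept, so the final block $L'$ of $H$ contains infinitely many rungs; being locally finite, with two disjoint induced rays as rails meeting only at its initial vertex and with all crosses and embedded fans trivial, $L'$ is a one-way infinite super-clean pinched ladder (if no joining vertex is ever produced then $L'=H$). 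Taking $L'$ as the induced subgraph gives the second outcome.

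The main obstacle is the bookkeeping for the infinite run rather than any single hard idea. Concretely, the two points that need care are (i) confirming that the process leaves no vertex permanently \status{unknown}, which is where local finiteness of the slim ladder is essential, as it forces the indices of the processed rungs to grow so that \ref{1.1} and \ref{2.1} eventually catch every rail vertex; and (ii) verifying in the first case that each block between consecutive joining vertices is genuinely finite, which again follows from the finiteness of the rail segments determined by two joining vertices of finite index. The crucial structural fact that $H$ has no nontrivial crosses or fans is inherited unchanged from the post-algorithm analysis, so once these two finiteness checks are in place the case split is routine.
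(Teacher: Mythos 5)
Your proof is correct, and at its core it uses the same tool as the paper: the super-cleaning process of \cref{alg:superclean}, run on the infinite slim ladder. But your case decomposition differs from the paper's in a way worth noting. The paper splits \emph{before} running the algorithm, on whether the slim ladder has finitely or infinitely many nontrivial embedded fans: in the finite case it simply deletes an initial segment containing all nontrivial fans and applies only the initialization step, directly obtaining a one-way infinite super-clean pinched ladder without running the iteration at all; in the infinite case it runs the full process and asserts that infinitely many joining vertices are produced. You instead run the full process unconditionally and split \emph{afterwards} on the number of joining vertices actually created. Your organization buys two things. First, it sidesteps the paper's implicit claim that infinitely many nontrivial fans in $L$ force infinitely many joining vertices --- this is true but not immediate, since the \status{delete} assignments made when one fan is processed can destroy or trivialize other fans, so some argument (e.g., that finitely many joining vertices entail finitely many deletions, so infinitely many nontrivial fans of $L$ would survive intact into $H$, contradicting super-cleanness) is needed; your split on joining vertices makes this moot. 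Second, you supply explicitly the progress and totality checks that the paper relegates to the remark preceding the lemma: that the termination conditions in \ref{1.3}, \ref{1.4}, \ref{2.2}, \ref{2.3}, and \cref{case3} never fire on rays, that the minimal open rung value is nondecreasing and unbounded, and that local finiteness forces the rail indices of processed rungs to grow so every vertex eventually receives a status and every block between consecutive joining vertices is finite. The cost is the extra bookkeeping; the paper's truncation trick disposes of the finite-fan case in two lines. Both proofs are sound, and yours is arguably the more self-contained of the two.
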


\begin{proof}
Suppose the infinite slim ladder is $(L,P,Q)$ with $P=p_1 p_2 \dots$ and $Q = q_1 q_2 \dots$.  First assume that $L$ has only finitely many nontrivial embedded fans.  Then we may choose a rung $p_i q_j$ so that all vertices of nontrivial embedded fans occur either on $P$ before $p_i$, or on $Q$ before $q_j$.  Deleting $\{p_1, p_2, \dots p_{i-1}\} \cup \{q_1, q_2, \dots, q_{j-1}\}$ therefore gives an infinite slim ladder $L'$ with no nontrivial embedded fans.  If some $v \in \{p_i, q_j\}$ has degree $2$ in $L'$ then $L'$ is a one-way infinite super-clean pinched ladder with initial vertex $v$.  Otherwise there is a cross $p_i q_{j+1}, p_{i+1} q_j$ and $L'-q_j$ is a one-way infinite super-clean pinched ladder with initial vertex $p_i$.

Now suppose that $L$ has infinitely many nontrivial embedded fans.  We may assume that infinitely many of these have an apex vertex on $P$.  Then we may choose an infinite sequence of vertex-disjoint nontrivial embedded fans of $L$ with apex on $P$ and not including $p_1$ or $q_1$.  Let $p_1', p_2', \dots$ be the sequence of apex vertices of these fans in order along $P$, where $p_i'$ is the apex vertex of a fan with rim $Q[q_i', q_i'']$.  Let $p_0' = p_1$ and $q_0'' = q_1$.  Then deleting the internal vertices of all paths $Q[q_i', q_i'']$ gives a one-way infinite chain of block $B_1, B_2, \dots$.
Each block $B_i$ has a spanning cycle consisting of two paths: $P[p'_i, p'_{i+1}]$, and a path formed by $p_i' q_i''$, $Q[q_i'', q_{i+1}']$, and $p_{i+1}' q_{i+1}'$.  Blocks $B_i$ and $B_{i+1}$ are separated by cutvertex $p'_i$.  By \cref{lem:cleaningblocks} each $B_i$ contains a chain $L_i$ of pinched super-clean ladders with initial vertex $p'_{i-1}$ and final vertex $p'_i$, and $\bigcup_{i=1}^\infty L_i$ is a one-way infinite chain of pinched super-clean ladders with initial vertex $p_0'=p_1$.
\end{proof}

We can now prove \cref{thm:inf2econ}.

\thminftwoecon*
\begin{proof}
\cref{lem:infblock} implies that $G$ has either an infinite block, a vertex that is in infinitely many blocks, or a one-way infinite chain of finite blocks.

If $G$ contains an infinite block, then \cref{thm:inf2con} implies that $G$ contains one of the following as an induced subgraph:  $K_{\infty}$, an infinite slim ladder, or a member of one of the following families: $\Theta_\infty$, $\mathscr{F}_{\infty}$, $\mathscr{F}_{\infty}^{\Delta}$, $\mathscr{L}_{\infty}$, $\mathscr{L}^{\Delta}_{\infty}$, or $\mathscr{L}^{\nabla\Delta}_{\infty}$.
If $G$ contains $K_\infty$ or a member of $\Theta_\infty$, $\mathscr{L}_{\infty}$, $\mathscr{L}^{\Delta}_{\infty}$, or $\mathscr{L}^{\nabla\Delta}_{\infty}$, then the theorem holds.
If $G$ contains a member of $\mathscr{F}_{\infty}$ or $\mathscr{F}_{\infty}^\Delta$, then \cref{lem:inffan} implies that $G$ contains an $\infty$-flower, and the conclusion follows.
If $G$ contains an infinite slim ladder, then \cref{lem:infcleanladder} implies that $G$ contains either an infinite chain of finite super-clean pinched ladders or a one-way infinite super-clean pinched ladder, and the conclusion follows.

We may therefore assume that $G$ does not contain an infinite block.
Suppose that $G$ has a cutvertex $v$ that is in infinitely many blocks $B_1$, $B_2$, \dots.   
Let $C_i$ be a shortest cycle in $B_i$ that includes $v$.  Then $\bigcup\limits_{i\in\mathbb{N}} C_i$ is an $\infty$-flower, and the conclusion follows.

We may therefore assume that $G$ is locally finite and has a one-way infinite chain of finite blocks, $B'_1$, $B'_2$, \dots. 
Since each block is finite, we may use \cref{lem:cleaningblocks} in a way similar to the second part of the proof of \cref{lem:infcleanladder}, to find a chain of super-clean ladders between the joining vertices of each block $B'_i$ for $i \ge 2$.  The union of these is a one-way infinite chain of finite super-clean pinched ladders, and the conclusion follows.
\end{proof}

\section{Conclusion}\label{sec:conclusion}
The next natural question would be to determine the unavoidable induced subgraphs, or even unavoidable subgraphs, for $3$-edge-connected or $3$-connected graphs.  However, these sets cannot have a simple structure.  
Consider an arbitrary $3$-connected (equivalently $3$-edge-connected) cubic graph $G$.
The deletion of a vertex or edge in $G$ results in a graph that is not $3$-edge-connected because there would be at least one vertex of degree two in $G- v$.
Thus, the list of unavoidable $3$-edge-connected (or $3$-connected) induced subgraphs or subgraphs must include every large cubic graph.

In the \hyperref[sec:otherstructures]{Appendix} below we use our results for induced subgraphs to derive unavoidable substructure results for $2$-edge-connected graphs under several other common orderings.  There is a further ordering of graphs that is a weakening of the induced subgraph, subgraph and topological minor orderings, but incomparable with the minor ordering, namely the \emph{(weak) immersion} ordering.  For immersions it makes most sense to deal with multigraphs and consider edge-connectivity rather than (vertex\nobreakdashes-)connectivity.  It is easy to show that the only large unavoidable immersions for $2$-edge-connected multigraphs are cycles $C_r$ (this follows from our \crefWithTheorem{mgtopminorfin}).  Barnes \cite{mattthesis} determined the unavoidable immersions for $3$-edge-connected multigraphs, and Ding and Qualls \cite{unavoidableimmersions4econ} determined them for $4$-edge-connected multigraphs.  Ding and Qualls (see \cite[Section 3.5]{brittthesis}) have also determined the unavoidable immersions for $2$- and $3$-edge-connected infinite multigraphs.  We note that the unavoidable immersions for $2$-edge-connected infinite multigraphs (namely $S_{2,\infty}$, $P_{2,\infty}$, and $L_\infty$) follow from \crefWithTheorem{mgtopminorinf}.

Unavoidable substructure results for matroids, or involving orderings other than those we have already discussed, or involving different notions of connectivity, are also known, and we mention a few of these.  Ding, Oxley, Oporowski, and Vertigan \cite{Unavoidableminors3connbinmatroids,Unavoidableminors3connmatroids} determined the unavoidable large minors for $3$-connected binary and general matroids, respectively.
There is a \emph{parallel minor} ordering for both graphs and matroids that strengthens the minor ordering in a different direction from topological minors.
C.~Chun, Ding, Oporowski, and Vertigan \cite{unavoidableparminor4conngraphs} determined the unavoidable parallel minors for $k$-connected graphs for $k \le 3$, and for internally $4$-connected graphs, and C.~Chun and Oxley  \cite{Unavoidableparminorregmatroids} determined the unavoidable parallel minors for $3$-connected regular matroids.
C.~Chun and Ding \cite{CD10} obtained results on large unavoidable topological minors and parallel minors in infinite graphs based on the idea of `loose connectivity'.

\appendix
\section*{Appendix: Other unavoidable substructures}\label{sec:otherstructures}
\gdef\thesection{A}

In this appendix we apply our results on unavoidable induced subgraphs (\cref{thm:2econ,thm:inf2econ}) to provide results on other unavoidable substructures in $2$-edge-connected graphs.  The proofs are mostly straightforward, so we leave them to the reader, giving only occasional comments.

\subsection{Unavoidable subgraphs, topological minors, and minors}\label{ss:unavoidsub}

In this subsection we state results on unavoidable subgraphs, topological minors, and minors in $2$-edge-connected graphs.  We assume the reader is familiar with these orderings.  Our results also imply the existence of unavoidable Eulerian subgraphs.

Let $\mathscr{K}_{2,r}$ be the subset of $\Theta_r$ consisting of subdivisions of $K_{2,r}$, and we define $\mathscr{K}_{2,\infty}$ similarly.  An $r$-flower or $\infty$-flower is \emph{triangular} if each of its cycles is a triangle. \cref{thm:2econ} immediately implies the following.

\begin{theorem}\label{unavsgtm}
For every integer $r \ge 3$, there is an integer $\fref{unavsgtm}(r)$ such that every $2$-edge-connected graph of order at least $\fref{unavsgtm}(r)$ has the following.
\begin{statement}
 \item\label{unavsubgraph} A subgraph that is an $r$-flower, a cycle of order at least $r$, a chain of $r$ cycles, or a member of the family $\mathscr{K}_{2,r}$.
 \item\label{unavtopminor}
 A topological minor (and hence minor) that is a triangular $r$-flower, $C_r$, a chain of $r$ triangles, or $K_{2,r}$.
\end{statement}
\end{theorem}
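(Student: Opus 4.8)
The plan is to derive both assertions directly from \cref{thm:2econ}, since each of its five unavoidable induced subgraphs already contains, as an ordinary subgraph (and, after suppressing degree-$2$ vertices, as a topological minor), exactly one of the structures we want. Concretely, I would set $\fref{unavsgtm}(r)=\fref{thm:2econ}(r+1)$ and apply \cref{thm:2econ} with parameter $r+1$, so that a $2$-edge-connected graph $G$ of order at least $\fref{unavsgtm}(r)$ contains an induced copy of $K_{r+1}$, an $(r+1)$-flower, a super-clean pinched ladder of order at least $r+1$, a chain of $r+1$ super-clean pinched ladders, or a member of $\Theta_{r+1}$. I would then run the same five-case split for both conclusions, under the correspondence in which $K_{r+1}$ yields $C_r$, an $(r+1)$-flower yields an $r$-flower, a super-clean pinched ladder yields a long cycle, a chain yields a chain of cycles, and $\Theta_{r+1}$ yields a member of $\mathscr{K}_{2,r}$.

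For the subgraph conclusion each case is a one-line observation. An induced $K_{r+1}$ contains $C_{r+1}$, a cycle of order at least $r$. An $(r+1)$-flower contains an $r$-flower by discarding one petal. For a super-clean pinched ladder $(L,P,Q)$ the two paths $P$ and $Q$ form a cycle $P\cup Q$ on all $|V(L)|$ vertices, since $V(L)=V(P)\cup V(Q)$ and $P,Q$ meet only in $\sigma,\tau$; this gives a cycle of order at least $r$. Taking this outer cycle in every block of a chain of $r+1$ super-clean pinched ladders yields a chain of $r+1$ cycles, whose blocks meet exactly at the joining vertices. Finally, a member of $\Theta_{r+1}$ consists of $r+1$ internally disjoint $u_1u_2$-paths; at most one of these is the single edge $u_1u_2$, so at least $r$ have an interior vertex, and any $r$ of those together with $u_1,u_2$ form a subdivision of $K_{2,r}$, that is, a member of $\mathscr{K}_{2,r}$.

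For the topological-minor conclusion I would observe that each target is precisely the ``un-subdivided core'' of the corresponding subgraph produced above. A cycle of order at least $r$ is a subdivision of $C_r$. An $r$-flower is a subdivision of a triangular $r$-flower (subdivide each triangle into the corresponding petal; the hub is the only branch vertex). A chain of $r$ cycles is a subdivision of a chain of $r$ triangles (each cycle subdivides a triangle, and the joining vertices are exactly the branch vertices). And $r$ internally disjoint paths of length at least $2$ between two vertices form a subdivision of $K_{2,r}$. Hence the subgraphs found for the first conclusion are themselves subdivisions of the claimed topological minors, so each is present as a topological minor, and every topological minor is a minor.

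The content here is entirely routine, and the only point requiring care is the $\Theta$ case. A member of $\Theta_r$ may use the edge $u_1u_2$ as one of its paths, in which case only $r-1$ of its paths have interior vertices, so it contains $K_{2,r-1}$ but not necessarily $K_{2,r}$ as a subdivision. This is exactly why I would invoke \cref{thm:2econ} with parameter $r+1$ rather than $r$, which guarantees $r$ paths with interior vertices; the slightly larger parameter is harmless in every other case, since each of those requires only that the parameter be at least $r$.
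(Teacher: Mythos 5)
Your proof is correct and follows exactly the route the paper intends: the paper offers no explicit argument, stating only that \cref{thm:2econ} ``immediately implies'' \cref{unavsgtm}, and your five-case extraction of subgraphs and their un-subdivided cores is precisely that immediate implication spelled out. Your observation that the $\Theta$ case forces the parameter bump to $r+1$ (since one of the internally disjoint paths may be the edge $u_1u_2$, leaving only $r-1$ paths with interior vertices) is a genuine detail the paper's ``immediately'' glosses over, and you handle it correctly.
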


We can also obtain a result for multigraphs from \cref{unavsgtm} by subdividing each edge of a multigraph to get a simple graph, then translating the unavoidable substructures into multigraph substructures.
For multigraphs the number of edges is the appropriate measure of largeness, rather than order.
The topological minor and minor orderings are slightly different for multigraphs, because we do not necessarily delete parallel edges created by contractions.

Let $D_r$ be the $r$-edge dipole consisting of $r$ parallel edges between two vertices,
let $S_{2,r}$ be the graph obtained from $K_{1,r}$ by doubling each edge, i.e., replacing each edge with a parallel class of two edges, and let $P_{2,r}$ be the graph obtained from $P_r$ by doubling each edge.
We obtain $D_r$, $S_{2,r}$, and $P_{2,r}$ from $K_{2,r}$, a triangular $r$-flower, and a chain of $r$ triangles, respectively, by contracting vertices of degree 2.

\begin{corollary}\label{cor:2econtm}
For every integer $r\ge 2$, there is an integer $\fref{cor:2econtm}(r)$ such that every $2$-edge-connected multigraph with at least $\fref{cor:2econtm}(r)$ edges has the following.
\begin{statement}
 \item A subgraph that is an $r$-flower, a cycle of order at least $r$, a chain of $r$ cycles, $D_r$, or a member of the family $\mathscr{K}_{2,r}$.
 \item\label{mgtopminorfin}
 A topological minor (and hence minor) that is $S_{2,r}$, $C_r$, $P_{2,r}$, or $D_r$.
\end{statement}
\end{corollary}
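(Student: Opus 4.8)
The plan is to deduce both statements from the simple-graph result \cref{unavsgtm} by passing to a subdivision. Given a $2$-edge-connected multigraph $M$ with $m$ edges, let $G$ be the graph obtained by subdividing every edge of $M$ exactly once, and write $m_e$ for the new midpoint of the edge $e\in E(M)$. Since parallel edges of $M$ acquire distinct midpoints and $M$ has no loops, $G$ is simple; subdivision preserves $2$-edge-connectivity, so $G$ is a $2$-edge-connected graph. Moreover $G$ is bipartite with parts $V(M)$ and $\{m_e : e\in E(M)\}$, every midpoint has degree $2$ in $G$, and $|V(G)| = |V(M)|+m \ge m$. Writing $r' = 2r$ and setting $\fref{cor:2econtm}(r) = \fref{unavsgtm}(2r)$ therefore guarantees $|V(G)| \ge \fref{unavsgtm}(r')$, so \cref{unavsgtm} applies to $G$ with parameter $r' = 2r \ge 3$.

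For the first statement I would take the unavoidable subgraph $H$ of $G$ produced by \cref{unavsgtm} and recover a sub-multigraph of $M$ by contracting the midpoints. The key observation is that each graph listed in \cref{unavsgtm} has minimum degree at least $2$; since each midpoint $m_e$ has degree $2$ in $G$, it follows that $H$ uses both edges of the subdivided edge $e$ (when $m_e\in V(H)$) or neither, so suppressing the midpoints of $H$ yields a genuine subgraph of $M$. Because $G$ is bipartite, every cycle of $H$ has even length $2k$ and contracts to a cycle of length $k$ in $M$, which is a digon when $k=2$. Thus an $r'$-flower contracts to an $r'$-flower, a cycle of order at least $r'$ to a cycle of order at least $r'/2 = r$, and a chain of $r'$ cycles to a chain of $r'$ cycles (the joining vertices have degree $4$ and so lie in $V(M)$); each of these contains the required structure. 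The only case needing more than contraction is a member of $\mathscr{K}_{2,r'}$: its two branch vertices have degree $r'\ge 3$ and so lie in $V(M)$, and each of its $r'$ internally disjoint paths contracts to either a single edge joining them (from a length-$2$ path in $G$) or to an $M$-path of length at least $2$. By pigeonhole at least $r$ of the $r' = 2r$ paths lie in one class, giving either $D_r$ or a member of $\mathscr{K}_{2,r}$.

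For the second statement I would start from the subgraph just produced in $M$ and suppress its remaining degree-$2$ vertices, which are now vertices of $V(M)$. Each structure then reduces to the one desired: an $r$-flower becomes $S_{2,r}$ (each petal collapses to a digon at the centre), a cycle of order at least $r$ becomes $C_r$, a chain of $r$ cycles becomes $P_{2,r}$ (each cycle collapses to a digon between its attachment vertices), a member of $\mathscr{K}_{2,r}$ becomes $D_r$ (each path collapses to an edge), and $D_r$ is already on the list. Since we need only subgraphs and topological minors, and not induced or clean structures, no care about chords or cleanliness is required, and the minor consequence is immediate.

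The routine part is the contraction itself; the delicate points are twofold. First, one must ensure that contracting the midpoints lands inside $M$ rather than merely producing a topological minor, and this is exactly where the minimum-degree-$2$ property of the structures in \cref{unavsgtm} is used. Second, short cycles must be handled uniformly: a $4$-cycle of $G$ contracts to a digon, so digons (length-$2$ cycles) have to be admitted as petals of flowers, as blocks of chains of cycles, and as the parallel class making up $D_r$. The pigeonhole split of a subdivided $K_{2,r'}$ into a dipole part and a genuine $\mathscr{K}_{2,r}$ part is the one place where contraction alone does not suffice, and it is there that the choice $r' = 2r$ is needed.
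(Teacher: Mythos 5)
Your proposal is correct and takes essentially the same route as the paper, which obtains \cref{cor:2econtm} from \cref{unavsgtm} precisely by subdividing each edge of the multigraph to get a simple $2$-edge-connected graph, measuring largeness by number of edges, and translating the unavoidable substructures back by suppressing the degree-$2$ midpoints. Your minimum-degree-$2$ observation and the pigeonhole split of a subdivided $K_{2,2r}$ into $D_r$ versus a member of $\mathscr{K}_{2,r}$ correctly supply the details the paper explicitly leaves to the reader.
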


We now consider Eulerian subgraphs.  Goddard and LaVey \cite{Goddard2024} proved that for each fixed $t$, there are only finitely many $2$-edge-connected graphs whose maximum closed trail length is at most $t$.  They proved finiteness by showing that there is a bound on the order of such graphs, but did not provide an explicit bound.  This result can be restated as follows.

\begin{theorem}[{Goddard and LaVey \cite[Lemma 4.1]{Goddard2024}}]\label{gl}
For every integer $s \ge 2$, there is an integer $\fref{gl}(s)$ such that every $2$-edge-connected graph of order at least $\fref{gl}(s)$ has an Eulerian subgraph of order at least $s$.
\end{theorem}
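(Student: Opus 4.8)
The plan is to read the result directly off the unavoidable-subgraph list already recorded in \cref{unavsgtm}\ref{unavsubgraph}, by checking that each of the four structures appearing there either is an Eulerian subgraph or contains a large one. Concretely, I would set $\fref{gl}(s) = \fref{unavsgtm}(2s)$ and let $G$ be any $2$-edge-connected graph of order at least this bound. Applying \cref{unavsgtm}\ref{unavsubgraph} with $r = 2s$ then yields a subgraph $H$ of $G$ that is a $2s$-flower, a cycle of order at least $2s$, a chain of $2s$ cycles, or a member of $\mathscr{K}_{2,2s}$. The remaining work is to produce from each such $H$ an Eulerian subgraph, that is, a connected subgraph in which every vertex has even degree, of order at least $s$.

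In the first three cases $H$ is already Eulerian. A flower, a single cycle, and a chain of cycles are each connected, and every vertex has even degree: a vertex lying on only one cycle has degree $2$, while the centre of a flower and each joining vertex of a chain of cycles lies on exactly two of the constituent cycles and so has degree $4$ (the centre of an $r$-flower has degree $2r$). For the orders, a cycle of order at least $2s$ already suffices, while a $2s$-flower and a chain of $2s$ cycles each have order at least $4s+1$, since every one of the $2s$ constituent cycles has length at least $3$ and contributes at least two vertices not shared with its neighbours. Thus in each of these three cases $H$ itself is an Eulerian subgraph of order at least $s$.

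The only case needing an adjustment is $H \in \mathscr{K}_{2,2s}$, where $H$ is a subdivision of $K_{2,2s}$, consisting of two hubs joined by $2s$ internally disjoint paths. Because the number of paths is even, each hub has even degree $2s$, so $H$ is connected with all degrees even and hence Eulerian; its order is at least $2s+1$, as at most one of the paths can be a single edge once we remember that $G$ is simple. This is the one place where any care is needed, and it is minor: for general (possibly odd) $r$, a subdivision of $K_{2,r}$ fails to be Eulerian precisely because the two hubs then have odd degree, and one repairs this simply by retaining an even number of the hub-to-hub paths; choosing $r = 2s$ even sidesteps even that. Having covered all four cases, $G$ has an Eulerian subgraph of order at least $s$, and the proof is complete. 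The genuine content is entirely supplied by \cref{unavsgtm}, so there is no substantial obstacle beyond this parity observation.
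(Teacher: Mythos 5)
Your proof is correct and is essentially the paper's own route: the paper obtains this result (in strengthened form, \cref{unaveulsubgraph}) by applying \cref{unavsgtm}\ref{unavsubgraph} with even $r=2s$ precisely so that each of the four unavoidable subgraphs is connected with all degrees even, the parity of the two hubs of a subdivided $K_{2,2s}$ being the only point requiring the factor of $2$, just as you observe. (Your one small slip --- members of $\mathscr{K}_{2,2s}$ have no hub-to-hub path that is a single edge, so the order is in fact at least $2s+2$ --- only strengthens your bound and does not affect the argument.)
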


Goddard and LaVey \cite[Theorem 4.2]{Goddard2024} used this result to give an upper bound on the number of colors needed to edge-color a large $2$-edge-connected graph so that there is a walk satisfying a certain `rainbow' condition between every pair of vertices. By taking \cref{unavsgtm}\ref{unavsubgraph} with even $r=2s$, we obtain the following strengthening of \cref{gl}.

\begin{corollary}\label{unaveulsubgraph}
For every integer $s \ge 2$ there is an integer $\fref{unaveulsubgraph}(s)$ such that every $2$-edge-connected graph of order at least $\fref{unaveulsubgraph}(s)$ has one of the following Eulerian subgraphs: a $2s$-flower, a cycle of order at least $s$, a chain of $s$ cycles, or a member of the family $\mathscr{K}_{2,2s}$.
\end{corollary}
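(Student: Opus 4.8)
The plan is to apply \cref{unavsgtm}\ref{unavsubgraph} with the even value $r = 2s$ and then check that each subgraph it produces is Eulerian. First I would set $\fref{unaveulsubgraph}(s) = \fref{unavsgtm}(2s)$; since $s \ge 2$ we have $2s \ge 4 \ge 3$, so the theorem applies. It then guarantees that every $2$-edge-connected graph of order at least $\fref{unaveulsubgraph}(s)$ contains, as a subgraph, a $2s$-flower, a cycle of order at least $2s$, a chain of $2s$ cycles, or a member of $\mathscr{K}_{2,2s}$.

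Next I would verify the Eulerian property for each type, using the fact that a finite connected graph is Eulerian exactly when every vertex has even degree; all four graphs are connected by construction. In a $2s$-flower the common vertex has degree $2 \cdot 2s = 4s$ and all other vertices have degree $2$, so every degree is even. A cycle is $2$-regular, hence Eulerian, and one of order at least $2s$ is in particular a cycle of order at least $s$. In a chain of cycles every joining vertex has degree $4$ and every other vertex has degree $2$, so a chain of $2s$ cycles is Eulerian and is a chain of at least $s$ cycles.

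The one case that genuinely uses the parity of $r$ — and the reason we take $r = 2s$ rather than an arbitrary $r$ — is $\mathscr{K}_{2,2s}$, and this is the only real obstacle. In $K_{2,2s}$ the two vertices of the part of size two each have degree $2s$, which is even precisely because $r = 2s$, while the $2s$ vertices of the other part have degree $2$. Subdividing edges only inserts new vertices of degree $2$ and leaves the degrees of the original vertices unchanged, so every member of $\mathscr{K}_{2,2s}$ has all degrees even and is therefore Eulerian. (For odd $r$ the two branch vertices of $K_{2,r}$ would have odd degree, so it would not be Eulerian; this parity failure is exactly what forces the even parameter.) Assembling these observations, the subgraph provided by \cref{unavsgtm}\ref{unavsubgraph} is an Eulerian subgraph of one of the listed forms. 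This proves the corollary, and since each such subgraph has order at least $s$, it also recovers and strengthens \cref{gl}.
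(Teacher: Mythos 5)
Your proof is correct and follows exactly the paper's route: the authors likewise obtain \cref{unaveulsubgraph} by applying \cref{unavsgtm}\ref{unavsubgraph} with the even value $r=2s$, leaving the routine verification of even degrees to the reader. Your fleshed-out degree check (in particular the parity observation explaining why $\mathscr{K}_{2,2s}$ rather than $\mathscr{K}_{2,s}$ is needed) fills in precisely the details the paper omits.
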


There is also a multigraph version of \cref{unaveulsubgraph}, which uses number of edges rather than order, and includes $D_{2s}$ as an additional possible Eulerian subgraph.

We also get a result for infinite graphs from \cref{thm:inf2econ}.
Recall that $\mathscr{L}_{\infty}$ consists of subdivisions of the infinite ladder $L_\infty$ where each rung is subdivided at least once.
Let $\mathscr{L}^0_\infty$ be the family of graphs obtained from $L_{\infty}$ by subdividing each of the rail edges an arbitrary number, possibly zero, of times (but without subdividing any rungs).
Note that $L_\infty$ is a member of $\mathscr{L}_\infty^0$.
In the infinite case the minor result is different from the topological minor result, because $L_\infty$ has a triangular $\infty$-flower (and also a one-way infinite chain of triangles) as a minor.

\begin{theorem}\label{infunavsubgraph}
Every infinite $2$-edge-connected graph has the following.
\begin{statement}
 \item A subgraph that is an $\infty$-flower, a one-way infinite chain of cycles, or a member of $\mathscr{K}_{2,\infty} \cup \mathscr{L}_\infty^0 \cup \mathscr{L}_\infty$. 
 \item\label{topminorinf}
 A topological minor that is a triangular $\infty$-flower, a one-way infinite chain of triangles, $L_\infty$, or $K_{2,\infty}$.
 \item A minor that is a triangular $\infty$-flower, a one-way infinite chain of triangles, or $K_{2,\infty}$.
\end{statement}
\end{theorem}

If we consider multigraphs instead of simple graphs, we get the following result by a similar process to the finite case.  Note that a multigraph is infinite if it has an infinite number of edges or vertices.
The multigraph $P_{2,\infty}$ is obtained from $P_{\infty}$ by doubling each edge, $S_{2,\infty}$ is obtained from $K_{1,\infty}$ by doubling each edge, and $D_\infty$ denotes a dipole with a countably infinite number of edges.

\begin{corollary}\label{cor:2ectminf}
Every infinite $2$-edge-connected multigraph has the following.
\begin{statement}
 \item A subgraph that is an $\infty$-flower, a one-way infinite chain of cycles, $D_\infty$, or a member of $\mathscr{K}_{2,\infty} \cup \mathscr{L}_{\infty}^0\cup \mathscr{L}_\infty$.
 \item\label{mgtopminorinf}
 A topological minor that is $S_{2,\infty}$, $P_{2,\infty}$, $L_\infty$, or $D_\infty$.
 \item A minor that is $S_{2,\infty}$, $P_{2,\infty}$, or $D_\infty$.
\end{statement}
\end{corollary}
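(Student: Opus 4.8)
The plan is to imitate the finite multigraph argument behind \cref{cor:2econtm}, reducing the whole statement to the simple infinite result \cref{infunavsubgraph} by a single subdivision. Let $M$ be an infinite $2$-edge-connected multigraph and let $G$ be the simple graph obtained from $M$ by subdividing every edge once, placing a new degree-$2$ vertex $w_e$ in the interior of each edge $e$. Since multigraphs here have no loops, parallel edges of $M$ become internally disjoint paths of length $2$, so $G$ is simple; subdivision preserves infiniteness and $2$-edge-connectivity, so $G$ is an infinite $2$-edge-connected simple graph and \cref{infunavsubgraph} applies. Note that $G$ is bipartite with parts $V(M)$ and $\{w_e : e \in E(M)\}$, and that suppressing all the vertices $w_e$ (replacing each path $x\,w_e\,y$ by the edge $e = xy$) recovers $M$.

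First I would prove the subgraph statement. The subgraph part of \cref{infunavsubgraph}, applied to $G$, yields a subgraph $H$ that is an $\infty$-flower, a one-way infinite chain of cycles, or a member of $\mathscr{K}_{2,\infty}\cup\mathscr{L}_\infty^0\cup\mathscr{L}_\infty$. Every vertex of each of these structures has degree at least $2$, so each subdivision vertex $w_e$ lying in $H$ keeps both of its edges, and suppressing the $w_e$ in $H$ produces a genuine sub-multigraph $H'$ of $M$. I would then read off $H'$ in each case. An $\infty$-flower maps to an $\infty$-flower: its cycles may shrink (a $4$-cycle through two vertices $v,u$ of $M$ collapses to a digon), but distinct cycles still meet only at the centre, so $H'$ is again a flower and can never degenerate to a single dipole. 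A chain of cycles maps to a chain of cycles, since its joining vertices have degree $4$ and hence are vertices of $M$. A member of $\mathscr{L}_\infty^0\cup\mathscr{L}_\infty$ maps to a subdivided infinite ladder in $M$; passing to the infinitely many rungs that have the same subdivision status (either all unsubdivided or all subdivided) gives a subladder that is a member of $\mathscr{L}_\infty^0$ or $\mathscr{L}_\infty$. Finally, the branch vertices of a member of $\mathscr{K}_{2,\infty}$ have infinite degree, hence are two vertices $a,b$ of $M$, and suppression turns its internally disjoint paths into internally disjoint paths between $a$ and $b$ in $M$; if infinitely many of these become single edges we obtain $D_\infty$, and otherwise infinitely many retain length at least $2$ and we obtain a member of $\mathscr{K}_{2,\infty}$. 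In every case $H'$ is one of the structures in the subgraph statement.

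The topological-minor statement then follows from the subgraph statement, because each subgraph just produced contains the required multigraph as a topological minor, exactly as in \cref{cor:2econtm}. Realizing each cycle of an $\infty$-flower as a subdivided digon between the centre and one further chosen vertex exhibits $S_{2,\infty}$; realizing each cycle of a chain of cycles as a subdivided digon between its two joining vertices exhibits $P_{2,\infty}$; both $D_\infty$ and any member of $\mathscr{K}_{2,\infty}$ have $D_\infty$ as a topological minor; and any subdivided ladder has $L_\infty$ as a topological minor. This gives a topological minor in $\{S_{2,\infty},P_{2,\infty},L_\infty,D_\infty\}$. The minor statement then follows since a topological minor is a minor and, as noted before \cref{infunavsubgraph}, $L_\infty$ has a triangular $\infty$-flower and a one-way infinite chain of triangles as minors, hence $S_{2,\infty}$ and $P_{2,\infty}$ as minors; thus the $L_\infty$ outcome may be replaced by $S_{2,\infty}$ or $P_{2,\infty}$, leaving the list $\{S_{2,\infty},P_{2,\infty},D_\infty\}$.

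The routine parts are the direct analogues of the finite multigraph proof, so I expect the only real work to be the translation bookkeeping in the subgraph step. The two points that need care are the $\mathscr{K}_{2,\infty}$-versus-$D_\infty$ dichotomy above and the verification that an $\infty$-flower cannot collapse to a dipole: because its cycles share only the centre, distinct cycles yield digons to distinct vertices, so the output is always a flower. A minor additional point is the pigeonhole used to extract a sub-ladder with uniformly (un)subdivided rungs, which is what keeps us inside $\mathscr{L}_\infty^0\cup\mathscr{L}_\infty$ rather than landing among arbitrarily subdivided ladders.
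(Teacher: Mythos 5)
Your proposal is correct and follows exactly the route the paper intends: subdivide every edge of the multigraph to obtain a simple infinite $2$-edge-connected graph, apply \cref{infunavsubgraph}, and translate the resulting structures back by suppressing the subdivision vertices (the paper states this only as ``a similar process to the finite case'' and leaves the details to the reader). Your worked-out translation details --- the $\mathscr{K}_{2,\infty}$-versus-$D_\infty$ dichotomy, the observation that a flower cannot collapse to a dipole since distinct petals meet only at the centre, and the pigeonhole argument keeping the ladder outcome inside $\mathscr{L}_\infty^0 \cup \mathscr{L}_\infty$ --- are all sound.
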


\crefWithTheorem{mgtopminorinf} also appears in the PhD thesis of Qualls \cite[Theorem 1.2.7]{brittthesis}.

An infinite graph is \emph{Eulerian} if it has a two-way infinite trail that uses every edge.  We can guarantee the existence of an infinite Eulerian subgraph with restricted structure. 

\begin{corollary}
Every infinite $2$-edge-connected graph has an Eulerian subgraph that is a two-way infinite path, an $\infty$-flower, an element of $\mathscr{K}_{2,\infty}$, or a one-way infinite chain of cycles.
\end{corollary}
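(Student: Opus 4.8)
The plan is to apply the subgraph statement of \cref{infunavsubgraph} and then check, case by case, that each candidate subgraph either is itself an Eulerian subgraph of one of the four permitted types or contains a two-way infinite path. By that statement, an infinite $2$-edge-connected graph $G$ contains a subgraph $S$ that is an $\infty$-flower, a one-way infinite chain of cycles, or a member of $\mathscr{K}_{2,\infty}\cup\mathscr{L}_\infty^0\cup\mathscr{L}_\infty$. The first three possibilities are already among the permitted Eulerian subgraphs, so for those I need only exhibit a two-way infinite Eulerian trail; the last two possibilities are ladders, which are not permitted types, so there I would instead extract a two-way infinite path.

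For the ladder cases I would observe that every member of $\mathscr{L}_\infty^0\cup\mathscr{L}_\infty$ consists of two disjoint induced rails $P=p_1p_2\dots$ and $Q=q_1q_2\dots$ joined at their initial vertices by the first rung (a single edge in $\mathscr{L}_\infty^0$, a subdivided path in $\mathscr{L}_\infty$). Traversing $P$ in reverse, then the first rung, then $Q$ yields a two-way infinite path; its vertices are distinct because the rails are disjoint and the interior of the first rung meets neither rail. A two-way infinite path is trivially Eulerian and is a permitted type, which settles these cases.

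For the remaining three subgraphs I would build the required two-way infinite trails directly, indexing the relevant pieces by $\mathbb{Z}$. In an $\infty$-flower with centre $c$ and cycles $\{C_i\}_{i\in\mathbb{Z}}$, the trail $\dots\,c\,C_{-1}\,c\,C_0\,c\,C_1\,c\,\dots$, in which each $C_i$ is traversed as a closed excursion from $c$ back to $c$, uses every edge exactly once. In a member of $\mathscr{K}_{2,\infty}$ with branch vertices $u_1,u_2$ and internally disjoint $u_1u_2$-paths $\{R_i\}_{i\in\mathbb{Z}}$, traversing the $R_i$ alternately produces a two-way infinite trail that shuttles between $u_1$ and $u_2$ and covers every edge. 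In both cases the high-degree vertices are revisited infinitely often, which is permitted for a two-way infinite trail.

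The main obstacle is the one-way infinite chain of cycles $C_1,C_2,\dots$, because this graph has a single end, so both tails of any two-way infinite trail must run out toward that one end; a naive attempt to ``go out and never come back'' cannot cover the graph. I would resolve this with an out-and-back trail that escapes to infinity along one arc of each cycle and returns along the other. Writing $w_i$ for the cutvertex shared by $C_i$ and $C_{i+1}$ and splitting each $C_i$ with $i\ge2$ into its two arcs between $w_{i-1}$ and $w_i$, the trail runs in from infinity along the ``down'' arcs to $w_1$, goes once around $C_1$, and runs back out to infinity along the ``up'' arcs. This uses both arcs of each $C_i$ (hence all of $C_i$) exactly once and all of $C_1$ once, and it is genuinely two-way infinite with both tails approaching the unique end. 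Checking that consecutive edges meet at a common vertex and that every edge is used exactly once is then routine.
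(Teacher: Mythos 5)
Your proposal is correct and takes essentially the same route the paper intends: the corollary is stated as an immediate consequence of \cref{infunavsubgraph}, with the two-way infinite path included precisely because it is the only infinite Eulerian subgraph of some members of $\mathscr{L}_\infty^0\cup\mathscr{L}_\infty$ (as the paper itself remarks), and your case analysis mirrors this. Your explicit trail constructions---the $\mathbb{Z}$-indexed excursions for the $\infty$-flower and $\mathscr{K}_{2,\infty}$, and the out-and-back trail through the two arcs of each cycle in the one-way infinite chain---correctly supply the details the paper leaves to the reader.
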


For multigraphs we must add $D_\infty$ as an additional possibility.  Note that a two-way infinite path is not $2$-edge-connected, but it is the infinite analog of a cycle because it is $2$-regular and connected.  We must include the two-way infinite path because it is the only infinite Eulerian subgraph of some $2$-edge-connected infinite ladders (including all members of $\mathscr{L}^0_\infty$).

\subsection{Unavoidable induced topological minors and induced minors}\label{ss:unavoidindminor}

In this subsection we consider two less common orderings, the induced topological minor and induced minor orderings (for simple graphs only, not for multigraphs).  We say that $H$ is an \emph{induced minor} of $G$ if a graph isomorphic to $H$ can be obtained from $G$ by vertex deletions and edge contractions.  If, moreover, each contracted edge is incident with a vertex of degree $2$ then we say $H$ is an \emph{induced topological minor} of $G$, which is equivalent to an induced subgraph of $G$ being isomorphic to a subdivision of $H$.  The orderings induced subgraph, induced topological minor, induced minor, and minor form a ranked sequence of orderings, from strongest to weakest.  Also, the induced topological minor ordering is stronger than the topological minor ordering, giving a third ranked sequence of orderings, namely induced subgraph, induced topological minor, topological minor, minor, again from strongest to weakest.

A super-clean pinched ladder is \emph{(topologically) irreducible} if it is a triangle or the only vertices of degree $2$ are the initial and final vertices.  Super-clean pinched ladders in a chain can be made irreducible by contracting edges incident with vertices of degree $2$, and can be reduced to triangles by contracting arbitrary edges.  However, we cannot make single large super-clean pinched ladders irreducible, as that may decrease their order in an uncontrolled way.  But if we are allowed to contract arbitrary edges, we can contract both edges in the span of each cross, which reduces the number of vertices by a factor of at most $\frac12$, and does not introduce any nontrivial embedded fans.  Therefore,  \cref{thm:2econ} yields the following.

\begin{theorem}\label{thm:indminorfin}
For every integer $r \ge 3$, there is an integer $\fref{thm:indminorfin}(r)$ such that every $2$-edge-connected graph of order at least $\fref{thm:indminorfin}(r)$ has the following.
\begin{statement}
 \item An induced topological minor that is $K_r$, a triangular $r$-flower, a super-clean pinched ladder of order at least $r$, a chain of $r$ irreducible super-clean pinched ladders, or $K_{1,1,r}$.
 \item An induced minor that is $K_r$, a triangular $r$-flower, a super-clean pinched ladder with no crosses of order at least $r$, a chain of $r$ triangles, or $K_{1,1,r}$.
\end{statement}\end{theorem}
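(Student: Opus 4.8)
The final statement, \cref{thm:indminorfin}, asserts two related unavoidable-substructure results under the induced topological minor and induced minor orderings. The plan is to derive both parts directly from the finite induced-subgraph theorem \cref{thm:2econ}, which already hands us one of $K_r$, an $r$-flower, a large super-clean pinched ladder, a chain of $r$ super-clean pinched ladders, or a member of $\Theta_r$ as an induced subgraph. The strategy throughout is to take each of these five unavoidable induced subgraphs and transform it, by contracting edges incident with degree-$2$ vertices (for induced topological minors) or by contracting arbitrary edges (for induced minors), into the simpler listed structures, while verifying that no operation creates a chord or merges nonadjacent vertices in a way that would destroy inducedness.

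**Handling the easy cases.** First I would observe that $K_r$ passes through unchanged under both orderings, so it appears verbatim in both lists. Next, an $r$-flower: contracting each cycle down to a triangle (by contracting along each cycle's degree-$2$ internal vertices) turns it into a triangular $r$-flower. Under the induced topological minor ordering this is legitimate because every internal cycle vertex has degree $2$; under the induced minor ordering we may contract even more freely and still land on a triangular $r$-flower. A member of $\Theta_r$ is a subdivision of a graph on two hub vertices joined by $r$ internally disjoint paths, i.e.\ a subdivision of $K_{2,r}$ possibly with an extra edge between the hubs. Suppressing the degree-$2$ internal path vertices (contractions incident with degree-$2$ vertices) yields $K_{2,r}$ or $K_{2,r}$ plus the hub edge, which is $K_{1,1,r}$; so $K_{1,1,r}$ is the right target for the induced topological minor part, and the same works as an induced minor.

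**The ladder cases — the main obstacle.** The substantive work is in the two ladder cases, where the topological-minor and minor parts genuinely diverge. For a chain of $r$ super-clean pinched ladders, under the induced topological minor ordering I would contract every edge incident with a degree-$2$ vertex inside each block, turning each block into an irreducible super-clean pinched ladder (a triangle, or one whose only degree-$2$ vertices are its initial and final vertices, as defined just before the theorem), giving a chain of $r$ irreducible super-clean pinched ladders. Under the weaker induced minor ordering I can contract arbitrary edges and collapse each block all the way to a triangle, yielding a chain of $r$ triangles. The genuinely delicate case is the \emph{single large} super-clean pinched ladder, because here I must preserve order up to a controlled factor. As the paragraph preceding the theorem notes, for the induced topological minor part I simply keep the super-clean pinched ladder itself (its order is already $\ge r$). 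For the induced minor part I would contract both rail edges in the span of each (necessarily trivial) cross; this eliminates all crosses, producing a super-clean pinched ladder with no crosses, and since each contraction removes at most a constant fraction of vertices the order stays at least $r$ after possibly starting from a pinched ladder of order $2r$ in \cref{thm:2econ}. The key verification — and the place I expect the real care to be needed — is showing that contracting a trivial cross's span does not introduce any nontrivial embedded fan and does not create chords violating inducedness; this follows because a trivial cross has single-edge spans, so the contraction merely identifies adjacent rail vertices and the resulting rung pattern remains one where all crosses and fans stay trivial. Once these five case analyses are assembled, both statements follow.
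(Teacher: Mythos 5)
Your proposal is correct and follows essentially the same route as the paper, which obtains both parts from \cref{thm:2econ} by exactly this case analysis: keep the single large super-clean pinched ladder intact for the induced topological minor part, and for the induced minor part contract both edges in the span of each trivial cross, which at worst halves the order (hence the invocation with parameter $2r$) and introduces no nontrivial embedded fans, since crosses in a super-clean pinched ladder are vertex-disjoint from each other and from the trivial fans. The only small point to tighten is the $\Theta_r$ case: when there is no hub edge, suppression gives $K_{2,r}$, which is not $K_{1,1,r}$ but is itself a subdivision of $K_{1,1,r-1}$ (one of the paths playing the role of the subdivided hub edge), so the parameter must be bumped by one, a detail absorbed into the choice of $\fref{thm:indminorfin}(r)$.
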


In the infinite case we say a one-way infinite super-clean pinched ladder is \emph{(topologically) irreducible} if the only vertex of degree $2$ is the initial vertex.  An infinite super-clean pinched ladder has infinitely many rungs, so we can make it irreducible by contracting rail edges incident with vertices of degree $2$ without affecting the fact that we have an infinite graph.
Define $L_\infty\tri$ and $L_\infty\tritri$ to be the graphs obtained by replacing all vertices on one or both rails of $L_\infty$, respectively, by triangles.  Then all members of $\Loneinf \cup \Ltwoinf \cup \Lthreeinf$ are subdivisions of $L_\infty$, $L_\infty\tri$, or $L_\infty\tritri$.
Moreover, each of $L_\infty$, $L_\infty\tri$, $L_\infty\tritri$, or an irreducible one-way infinite super-clean pinched ladder has a triangular $\infty$-flower as an induced minor.
Thus, \cref{thm:inf2econ} gives the following.

\begin{theorem}\label{thm:indminorinf}
Every infinite $2$-edge-connected graph has the following.
\begin{statement}
 \item An induced topological minor that is $K_\infty$, a triangular $\infty$-flower, a one-way infinite chain of irreducible super-clean pinched ladders, an irreducible one-way infinite super-clean pinched ladder, $K_{1,1,\infty}$, $L_\infty$, $L_\infty\tri$, or $L_\infty\tritri$.
 \item An induced minor that is $K_\infty$, a triangular $\infty$-flower, a one-way infinite chain of triangles, or $K_{1,1,\infty}$.
\end{statement}\end{theorem}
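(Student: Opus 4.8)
The plan is to run through the five types of outcome guaranteed by \cref{thm:inf2econ} and, for each, exhibit the required induced topological minor and induced minor. Recall that an induced topological minor is exactly an induced subgraph that is a subdivision of the target, so for part~(a) it suffices, for each induced subgraph produced by \cref{thm:inf2econ}, to realize it (or an induced subgraph of it) as a subdivision of one of the eight listed graphs. For part~(b) we then apply further edge contractions. Throughout, the only permitted operations are vertex deletion and edge contraction; in particular we must never delete an edge, and this is the constraint that makes extracting a triangular $\infty$-flower the delicate point.

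For part~(a), four outcomes are essentially immediate. An $\infty$-flower is itself a subdivision of a triangular $\infty$-flower, and $K_\infty$ is its own subdivision. Each member of $\Loneinf$, $\Ltwoinf$, or $\Lthreeinf$ is by definition a subdivision of $L_\infty$, $L_\infty\tri$, or $L_\infty\tritri$, respectively. A one-way infinite chain of finite super-clean pinched ladders is a subdivision of the chain obtained by suppressing every degree-$2$ vertex other than the joining vertices and the global initial vertex; since each $\sigma$ and $\tau$ has degree $2$ inside its block, the joining vertices have degree $4$ and are untouched, so this suppression makes each block irreducible and yields a one-way infinite chain of irreducible super-clean pinched ladders. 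Likewise a one-way infinite super-clean pinched ladder is a subdivision of its irreducible version, which remains infinite because there are infinitely many rungs. The one genuinely new point is $\Theta_\infty$: a member carrying the edge $u_1u_2$ (\cref{fig:K2infplus}) is directly a subdivision of $K_{1,1,\infty}$, while a member without that edge (\cref{fig:K2inf}) becomes a subdivision of $K_{1,1,\infty}$ once we reinterpret one of its internally disjoint $u_1u_2$-paths as the subdivided edge $u_1u_2$ and each remaining path as a subdivided pair of edges through a new degree-$2$ branch vertex.

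For part~(b) we start from the induced subgraphs of part~(a) and contract further. The graphs $K_\infty$, a triangular $\infty$-flower, and $K_{1,1,\infty}$ are already targets. A one-way infinite chain of irreducible super-clean pinched ladders is reduced to a one-way infinite chain of triangles by contracting each finite block to a triangle $\sigma m\tau$, contracting one rail (with its terminal edges) into $\tau$ and the other rail to a single vertex $m$; these are contractions internal to a block, so all cutvertices, and hence the chain structure, are preserved. It therefore remains to show that each of $L_\infty$, $L_\infty\tri$, $L_\infty\tritri$, and an irreducible one-way infinite super-clean pinched ladder has a triangular $\infty$-flower as an induced minor.

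This last reduction is where I expect the main difficulty, precisely because edge deletion is forbidden. The plan is to contract one entire rail to a single vertex $c$. Since each of these graphs has infinitely many rungs, $c$ then has infinitely many neighbours on the other rail, which is an induced ray $v_1v_2v_3\dots$; in $L_\infty$ every rim vertex carries a rung, and in the irreducible infinite super-clean pinched ladder irreducibility forces every non-initial rail vertex to carry a rung, so $c$ is adjacent to every $v_i$. This produces an induced minor isomorphic to the fan $F_\infty$ with apex $c$. From $F_\infty$ one extracts a triangular $\infty$-flower as an induced subgraph by deleting every third rim vertex $v_3,v_6,v_9,\dots$: the survivors induce the edge-disjoint triangles $cv_{3i+1}v_{3i+2}$ meeting only at $c$, with no other edges, since the connecting rim vertices have been removed. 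For $L_\infty\tri$ and $L_\infty\tritri$ one first contracts the triangles that replace rail vertices back to single vertices, reducing to $L_\infty$. Composing these induced-minor operations yields the triangular $\infty$-flower in every case, completing part~(b).
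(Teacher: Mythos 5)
Your proposal is correct and follows essentially the same route as the paper, which only sketches this argument in the paragraph preceding the theorem: run through the outcomes of \cref{thm:inf2econ}, pass to irreducible (chains of) super-clean pinched ladders and note that the ladder families are subdivisions of $L_\infty$, $L_\infty\tri$, $L_\infty\tritri$ for part (a), and for part (b) reduce each ladder-like outcome to a triangular $\infty$-flower by contractions; your explicit construction for the one claim the paper merely asserts (contract one rail to an apex, obtaining $F_\infty$ since irreducibility forces a rung at every non-initial rail vertex, then delete every third rim vertex) is a correct filling-in of the detail left to the reader. One boundary case needs a small repair: if a block of the chain is a cycle of length at least $4$, suppressing \emph{every} internal degree-$2$ vertex yields a digon rather than a simple graph, so you should instead retain one internal vertex and use the fact that any cycle is a subdivision of a triangle --- which is precisely why the paper's definition of irreducible admits triangles.
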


\section*{Acknowledgments}

The second author is grateful for support from the Simons Foundation under award  MPS-TSM-00002760.

We thank Guoli Ding and Brittian Qualls for helpful discussions.  We also thank two anonymous referees of an earlier version of this paper whose comments helped to improve the presentation.  In particular, we express our great appreciation to one referee who pointed out a way to significantly simplify the proofs of \cref{thm:2econ,thm:inf2econ}.

\bibliographystyle{hplain}
\bibliography{bibliography}{}
\vspace{-5mm}

\end{document}